\newtheorem{theorem}{Theorem}
\newtheorem{result}[theorem]{Result}
\newtheorem{lemma}[theorem]{Lemma}
\newcommand{\Label}{\label}
\newcommand{\Labele}{\label}
\newcommand{\B}{\mathcal B}
\newcommand{\PG}{{\textup{PG}}}
\newcommand{\PGammaL}{\textup{P}\Gamma{\textup {L}}}
\newcommand{\Fqq}{\mathbb{F}_{q^2}}
\newcommand{\Fq}{\mathbb{F}_{q}}
\newcommand{\K}{\mathcal{K}}
\newcommand{\C}{\mathcal{C}}
\newcommand{\li}{\ell_\infty}
\newcommand{\U}{\mathcal U}
\newcommand{\pedal}{\textsl{\textup{pedal}}}
\newcommand{\e}{\epsilon}
\newcommand{\Uab}{\mathcal U_{\alpha\beta}}
 \newcommand{\eone}{R_1}
\newcommand{\etwo}{R_2}
\newcommand{\ee}{R}
\begin{document}

\title{The feet of orthogonal Buekenhout-Metz unitals}
\author{S.G. Barwick, W.-A. Jackson and P. Wild}
\date{}

\maketitle

%
%

AMS code: 51E20

Keywords: unital, Buekenhout-Metz unital, feet, pedal set

\begin{abstract} In this article we look at the geometric structure of the feet of an orthogonal Buekenhout-Metz unital $U$ in $\PG(2,q^2)$. We show that the feet of each point form a set of type $(0,1,2,4)$. Further, we discuss the structure of any $4$-secants, and determine exactly when the feet form an arc.
\end{abstract}

\section{Introduction}

A set $\K$   of points in the Desarguesian projective plane $\PG(2,q^2)$ is called a \emph{set of type} $(a_1,\ldots,a_k)$ for positive integers $a_1,\ldots,a_k$ if for every line $\ell$ of $\PG(2,q^2)$, $|\ell\cap\K|\in\{a_1,\ldots,a_k\}$.
If for each $a\in\{a_1,\ldots,a_k\}$, there exists at least one line $\ell$ with $|\ell\cap\K|=a$, then $\K$ is called a 
\emph{set of class} $(a_1,\ldots,a_k)$. A  $k$-\emph{arc} in $\PG(2,q^2)$ is a set of $k$ points of type $(0,1,2)$. 

A \emph{unital} in $\PG(2,q^2)$ is a set of class $(1,q+1)$ of size $q^3+1$.  
The set of absolute points of a unitary polarity forms a unital, called the \emph{classical unital}. 
The known unitals of $\PG(2,q^2)$ can be described using the \emph{Bruck-Bose representation} of $\PG(2,q^2)$ in $\PG(4,q)$. This representation was developed independently by Andr\'e \cite{andre}, Segre~\cite{segre} and Bruck and Bose \cite{bruc64,bruc66}. See \cite{book} for full details on the Bruck-Bose representation, and the structure of unitals in this representation. The Bruck-Bose representation is an affine construction, so the representation depends on the line at infinity. 
Buekenhout~\cite{buek} showed that  the classical unital tangent to $\li$
of $\PG(2,q^2)$ corresponds in $\PG(4,q)$ to  an elliptic cone containing a spread line. Moreover, each   elliptic cone in $\PG(4,q)$  containing a spread line corresponds to a unital of $\PG(4,q)$. Metz~\cite{metz} showed there exists non-classical unitals arising from this construction. To emphasis that the correspondence relates to the line at infinity, we say a unital is an \emph{orthogonal Buekenhout-Metz unital with respect to $\li$} if it corresponds to an elliptic cone in the $\PG(4,q)$ Bruck-Bose representation \emph{with respect to $\li$}. We abbreviate this and write \emph{orthogonal BM unital wrt $\li$}.

More generally, a unital of $\PG(2,q^2)$ that corresponds to an ovoidal cone in the $\PG(4,q)$ Bruck-Bose representation with respect to $\li$  is called an \emph{ovoidal BM  unital wrt $\li$}. In particular, a \emph{Buekenhout-Tits unital} corresponds in $\PG(4,q)$ to an ovoidal cone whose ovoid is the Tits ovoid. 
All known unitals in $\PG(2,q^2)$ are ovoidal BM unitals, see \cite[p66]{book}. 

Coordinates for orthogonal BM unitals in $\PG(2,q^2)$ were computed in \cite{bake92} for $q$ odd and in \cite{eber92} for $q$ even.  Let $\Fq$ denote the finite field with $q$ elements.

\begin{result}\Label{Uab}
\cite{bake92, eber92}
For $\alpha,\beta\in\Fqq$, the point set 
$$ \U_{\alpha\beta}=\{  P_{x,r}=(x,\ \alpha x^2+\beta x^{q+1}+r,\ 1)\,|\, x\in\Fqq,r\in\Fq\}\cup\{  T_\infty=(0,1,0)\}$$
is an orthogonal BM unital wrt $\li$ in $\PG(2,q^2)$ iff the discriminant condition holds, that is:
\begin{itemize}
\item if $q$ is odd, the  \emph{discriminant} $d=(\beta^q-\beta)^2+4\alpha^{q+1}$ is a nonsquare 
in $\Fq$;
\item if $q$ is even, $\beta\notin\Fq$ and the \emph{discriminant} $d=\alpha^{q+1}/(\beta^q+\beta)^2$ has absolute trace $0$. 
\end{itemize}
Conversely, every orthogonal BM unital is equivalent to $\Uab$ for some $\alpha,\beta\in\Fqq$ satisfying the discriminant condition.
Moreover, $\Uab$ is classical iff $\alpha=0$. 
\end{result}

Let $\U$ be a unital of $\PG(2,q^2)$ and $P$ a point not on $\U$. There are $q+1$ lines through $P$ that are tangent to $\U$, the points of contact with $\U$ are called \emph{feet}. The set of $q+1$ points of contact is called the  \emph{pedal set} of $P$ and is denoted $\pedal(P)$.

Let $\U$ be the classical unital. As $\U$ arises from a polarity, $\pedal(P)$ is a collinear set for all points $P\notin\U$.   It was long conjectured that the converse must be true and this conjecture was proved in 1992 by Thas.

\begin{result}\cite{thas}\Label{result-thas} Let $\U$ be a unital in $\PG(2,q^2)$. If for all points $P\notin\U$, the feet of $P$ are collinear, then $\U$ is classical. 
\end{result}  

Aguglia and Ebert~\cite{aguglia} improved this characterisation showing that if the feet of all points $P\notin\U$ on two tangent lines to $\U$ are collinear, then $\U$ is classical. 

Let $\U$ be an ovoidal BM unital wrt $\li$, then \cite{dove96} showed that for each point $P\in\li$, $P\notin\U$, $\pedal(P)$ is a collinear set. 
In \cite[Thm 4.18, Thm 4.29]{book}, the pedal set of orthogonal BM unitals for points not on $\li$ are looked at, and it is shown that they are not collinear sets. A similar result for the Buekenhout-Tits unitals is proved in \cite{eber92}; see also
\cite[Thm 4.33]{book} and \cite[Remark 2]{fvdv}.

\begin{result}\Label{12} Let $\U$ be a non-classical ovoidal BM unital wrt $\li$ in $\PG(2,q^2)$ and let $P$ be any point not in $\U$. Then the feet of $P$ are collinear iff $P\in\li$. 
\end{result}

In the case when $q$ is odd, the structure of $\pedal(P)$ for points $P\notin\li$ was investigated in \cite{apv}.  

\begin{result}\cite{apv}\Label{30} Let $\Uab$ be a non-classical BM unital in $\PG(2,q^2)$, $q$ odd. For each point  $P\notin\Uab\cup\li$: 
\begin{enumerate}
\item $\pedal(P)$ is a set of type $(0,1,2,4)$,
\item $\pedal(P)$ can be partitioned into two arcs,
\item if $\beta\in\Fq$, then $\pedal(P)$ is a set of type $(0,1,2)$. 
\end{enumerate}
\end{result}

In this article we improve this result by determining exactly when $4$-secants exists, answering the first open problem posed in \cite{apv}. We show  that $\pedal(P)$ has a 4-secant iff $\alpha$ is a  nonzero square in $\Fqq$. 
Note that by Lemma~\ref{11}, if $\beta\in\Fq$, then $\alpha$ is a nonsquare in $\Fqq$, although the converse is not true.  That is, there exist unitals with $\alpha$ a nonsquare in $\Fqq$ and $\beta\notin\Fq$. So there are more cases when $\pedal(P)$ is an arc than described in Result~\ref{30}.

The set of $q+1$ lines joining a vertex point $P$ to the points of a Baer subline base is called a \emph{Baer pencil.}
This article proves the following result on the geometric structure of pedal sets when $q$ is odd.
 
\begin{theorem}\Label{19}  Let $\Uab$ be a non-classical BM unital in $\PG(2,q^2)$, $q$ odd. Let $P\notin\Uab\cup\li$, then  the points of $\pedal(P)$ lie on the lines of a  Baer pencil with vertex a point on $\li$. 
\begin{enumerate}
\item If   $\alpha$ is a nonsquare in $\Fqq$, then   $\pedal(P)$ is   an arc.
\item If   $\alpha$ is a nonzero square in $\Fqq$,  then 
\begin{enumerate}
\item    $\pedal(P)$ is a set of class $(0,1,2,4)$,
\item there are at least $\frac{q-3}2$ 4-secants of $\pedal(P)$,
\item  every 4-secant of $\pedal(P)$  contains  the point vertex  of the Baer pencil containing $\pedal(P)$.
\end{enumerate}
\end{enumerate}
\end{theorem}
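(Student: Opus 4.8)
The plan is to work explicitly with the coordinates from Result~\ref{Uab}. Given a point $P\notin\Uab\cup\li$, I would first write down the condition for a tangent line through $P$ to meet $\Uab$ at a foot, and translate ``the feet of $P$'' into an algebraic variety. Since $\pedal(P)$ consists of $q+1$ points, each foot $P_{x,r}$ corresponds to a tangent line, and the tangency condition (that the line meets $\Uab$ with multiplicity $q+1$, i.e.\ the associated polynomial in the affine parameter has a unique root of full multiplicity) should reduce to a system of equations in $x$ and $r$. The key first step is to parametrise the feet by their $x$-coordinates: I expect the $x$-values of the feet to be the roots of a single polynomial equation of degree $q+1$ (or a norm/trace condition over $\Fqq/\Fq$), with $r$ then determined as a function of $x$.

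Next I would establish the Baer-pencil structure. Because all known results (Result~\ref{12}, Result~\ref{30}) point to the feet being controlled by a point on $\li$, I would identify the vertex $V\in\li$ of the pencil directly from the coordinates: the natural candidate is the pole/companion point determined by $\alpha,\beta$ and $P$. The claim is that the $q+1$ feet lie on $q+1$ lines through $V$ whose intersections with a fixed Baer subplane (or Baer subline) form a Baer subline. Concretely, I would show that the map sending each foot to the line $VP_{x,r}$ lands in a set of $q+1$ lines forming a Baer pencil, which amounts to verifying that the corresponding points of $\li$ (the directions $V P_{x,r}$) form a Baer subline of $\li$. This is the geometric heart of the statement and I expect it to follow once the feet are parametrised, since a Baer subline is exactly an $\Fq$-rational set of $q+1$ points.

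For the dichotomy on $\alpha$, the idea is that whether two distinct feet can lie on a common secant through a point \emph{not} equal to $V$ is governed by a quadratic whose solvability depends on $\alpha$ being a square in $\Fqq$. For part (1), if $\alpha$ is a nonsquare, I would show the relevant quadratic (in the parameter measuring collinearity of three feet) has no $\Fq$-solutions, so no three feet are collinear; combined with $|\pedal(P)|=q+1$ this gives a $(q+1)$-arc. For part (2), when $\alpha$ is a nonzero square, the quadratic factors and produces genuine $4$-secants: I would count solutions to show there are at least $\frac{q-3}{2}$ such lines (the count coming from pairing up roots, with a few degenerate cases removed, explaining the $-3$), establishing (2b). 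Part (2c), that every $4$-secant passes through $V$, should follow by showing any secant meeting $\pedal(P)$ in $\geq 3$ points is forced through the pencil vertex — i.e.\ a line off $V$ meets each pencil line once and hence meets $\pedal(P)$ in at most two points by the arc-on-each-line property. Finally part (2a), the class $(0,1,2,4)$, combines (2b) with the type $(0,1,2,4)$ already known from Result~\ref{30} and the existence of $0,1,2$-secants.

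The main obstacle will be the explicit collinearity computation: expressing when three feet $P_{x_1,r_1},P_{x_2,r_2},P_{x_3,r_3}$ are collinear, reducing this $3\times3$ determinant to a clean condition on the $x_i$, and then recognising the square/nonsquare dichotomy of $\alpha$ inside it. The discriminant condition from Result~\ref{Uab} will have to be carried throughout to control which elements are squares, and the characteristic-odd assumption lets me complete squares freely; keeping the bookkeeping of the $\Fqq/\Fq$ norm and trace maps consistent is where the calculation is most delicate.
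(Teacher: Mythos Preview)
Your outline diverges from the paper's proof in two structural ways, and contains one genuine gap.

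First, the paper does not work with a general point $P$. It invokes Result~\ref{21}: the stabiliser of $T_\infty$ in $\PGammaL(3,q^2)$ has at most two orbits on $\PG(2,q^2)\setminus(\Uab\cup\li)$, so it suffices to analyse $\pedal(\eone)$ and $\pedal(\etwo)$ for the two explicit representatives $\eone=(0,\epsilon,1)$, $\etwo=(0,w\epsilon,1)$. You should build this reduction in rather than attempt a computation uniform in $P$.

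Second, the paper's engine is not a $3\times3$ collinearity determinant. Writing $x=x_0+x_1\epsilon$, the constraint singling out feet becomes the equation of a non-degenerate conic $\mathcal C$ in $\PG(2,q)$ (in the variables $x_0,x_1$). A line through the pencil vertex $U_\infty=(1,0,0)$ corresponds to a second conic $\mathcal D_s$ in $\PG(2,q)$, and $|\ell\cap\pedal(\eone)|=|\mathcal C\cap\mathcal D_s|$. The $4$-secant question is then decided by counting degenerate members of the pencil $\mathcal C+\lambda\mathcal D_s$; the cubic $\det(A_{\mathcal C}+\lambda A_{\mathcal D_s})=0$ factors as $(1-\lambda s)$ times a quadratic in $\lambda$, and that quadratic has roots in $\Fq$ iff $\alpha$ is a square in $\Fqq$. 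This is where the square/nonsquare dichotomy actually enters, not from a collinearity determinant in $\PG(2,q^2)$. For lines \emph{not} through $U_\infty$, the same $\Fq$-expansion turns the incidence condition into a conic--line intersection in $\PG(2,q)$, giving at most two points.

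The genuine gap is your argument for (2c). You write that a line off $V$ ``meets each pencil line once and hence meets $\pedal(P)$ in at most two points by the arc-on-each-line property.'' This does not follow: a line $\ell\not\ni V$ meets each of the $q+1$ pencil lines in one point, but nothing prevents many of those intersection points from being feet --- the feet are not distributed one-per-pencil-line (indeed, in the square case some pencil lines carry four feet). The bound $|\ell\cap\pedal(P)|\le2$ for $\ell\not\ni U_\infty$ requires its own computation (Lemma~\ref{17} in the paper), and without it neither (2c) nor part (1) goes through. Also note that since the vertex $V=U_\infty$ lies on $\li$, the base of the Baer pencil cannot be a Baer subline of $\li$ as you suggest; in the paper it is a Baer subline of the secant $T_\infty\eone$.
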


The case when $q$ is even and the unital is a Buekenhout-Tits unital was looked at in \cite{fvdv}.

\begin{result}\cite{fvdv} Let $\U$ be a Buekenhout-Tits unital in $\PG(2,q^2)$, $q$ even.  Let $P\notin\U\cup\li$, then $\pedal(P)$ is a set of type $(0,1,2,3,4)$. Moreover, for each $i\in\{0,1,2,3,4\}$, there exists a line $\ell$ and a point $P$ with $|\ell\cap\pedal(P)|=i$. 
\end{result}
 
 The remaining case when $\U$ is a  non-classical orthogonal BM unital and $q$ is even is looked at in this article. We show that in this case the pedal set is always an arc. 

\begin{theorem}\Label{24}
Let $\U$ be a non-classical orthogonal BM unital wrt $\li$ in $\PG(2,q^2)$, $q$ even. Let $P\notin\U\cup\li$, then 
\begin{enumerate}
\item  the points of $\pedal(P)$ lie on the lines of a Baer pencil with vertex a point  in $\li$,
\item $\pedal(P)$  is an arc. 
\end{enumerate}
\end{theorem}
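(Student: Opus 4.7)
By Result~\ref{Uab} we take $\U=\Uab$ with $\alpha\neq 0$, $\beta\notin\Fq$, $\gamma:=\beta+\beta^q\in\Fq^\ast$, and $\alpha^{q+1}/\gamma^2$ of absolute trace zero. Fix $P=(a,b,1)\notin\U\cup\li$. The approach is to parallel the proof of Theorem~\ref{19} in \cite{apv} for the $q$ odd case, replacing the square/non-square dichotomy by an additive trace analysis valid in characteristic~$2$.

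First I would derive the tangency equation. A line through $P$ other than $PT_\infty$ has direction $(1,m,0)$ for some $m\in\Fqq$; parametrising its intersection with $\U\setminus\{T_\infty\}$ in the form $P_{x,r}$ forces $r\in\Fq$, and in characteristic~$2$ this reduces to
\[
G_m(x)\;:=\;\alpha x^2+\alpha^q x^{2q}+\gamma x^{q+1}+mx+m^q x^q+c_m\;=\;0,
\]
where $c_m=b+b^q+am+a^qm^q\in\Fq$. Thus $G_m:\Fqq\to\Fq$ is an inhomogeneous $\Fq$-quadratic form on the $2$-dimensional $\Fq$-space $\Fqq$, and the line $\ell_m$ is a $k$-secant of $\U\setminus\{T_\infty\}$ with $k=|G_m^{-1}(0)|$. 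A short check shows $PT_\infty$ is a $(q+1)$-secant, so $T_\infty\notin\pedal(P)$ and every foot corresponds to an $m$ with $|G_m^{-1}(0)|=1$.

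For part~(1), the key observation is that the homogeneous quadratic part of $G_m$ is $m$-independent, with polar form $B(x,y)=\gamma(xy^q+x^qy)$---a nondegenerate alternating $\Fq$-bilinear form on $\Fqq$---while only the linear term $mx+m^qx^q$ and the constant $c_m$ vary with $m$. Exploiting this rigidity, one locates a single direction $m_0\in\Fqq$ (depending on $\alpha,\gamma,a,b$) such that the $q+1$ feet lie on $q+1$ lines through $V:=(1,m_0,0)\in\li$; the Baer-pencil property then follows because the map from tangent directions $m$ to the pencil-line coordinate is $\Fq$-affine, so these lines cut any transversal in a Baer subline.

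For part~(2), suppose three feet $Q_1,Q_2,Q_3$ are collinear on a line $\ell$. Since every tangent from $P$ has a unique foot, $P\notin\ell$ and the tangent directions $m_1,m_2,m_3$ are distinct; eliminating via the equations $G_{m_i}(x_i)=0$ and the collinearity of the $Q_i$ reduces everything to a single $\Fqq$-quadratic whose solvability is governed by an absolute trace condition. The main obstacle---and the heart of the theorem---is to show this trace condition is forcibly violated by $\alpha^{q+1}/\gamma^2$ having absolute trace zero. In odd characteristic the analogous calculation produces a genuine dichotomy governed by whether $\alpha$ is a square in $\Fqq$, permitting $4$-secants when it is; in characteristic~$2$ the trace obstruction takes a form incompatible with the discriminant condition of Result~\ref{Uab}, ruling out a $3$-secant altogether. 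Hence $\pedal(P)$ is an arc.
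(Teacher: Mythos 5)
Your proposal is a plan rather than a proof: the two decisive steps are asserted, not carried out, and the mechanism you name for part (2) is not the one that actually makes the theorem true. For part (1), "one locates a single direction $m_0$" is exactly the content that needs proving, and you give no construction of $m_0$ nor a verification that the resulting $q+1$ pencil lines cut a transversal in a Baer subline. The paper sidesteps the general point $P=(a,b,1)$ entirely: by the transitivity of the stabiliser of $T_\infty$ and $\Uab$ on the points off $\Uab\cup\li$ (Result~\ref{21even}), it suffices to treat the single point $R=(0,\delta,1)$, for which the feet are computed explicitly as $Q_x=(x,\alpha x^2+\alpha^qx^{2q}+\delta^q,1)$ with $x$ on a conic condition, and the vertex is simply $U_\infty=(1,0,0)$; the Baer subline appears because $s=\alpha x^2+\alpha^qx^{2q}$ is automatically in $\Fq$. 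If you insist on a general $P$ you must either prove the analogue of this computation for arbitrary $(a,b)$ or invoke the group action --- your sketch does neither.

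For part (2) the gap is more serious. You claim the collinearity of three feet reduces to a quadratic whose solvability is "governed by an absolute trace condition" that is "incompatible with the discriminant condition of Result~\ref{Uab}." That computation is not performed, and the paper's proof shows the trace-zero discriminant condition plays no role in the arc property. What actually happens is: expanding over the basis $\{1,\delta\}$, the feet correspond to the points of a conic $\C$ in $\PG(2,q)$ which is non-degenerate precisely because $\beta\notin\Fq$ (so $b_1\neq 0$ and the nucleus is off $\C$); a line not through the vertex $U_\infty$ cuts out a line of $\PG(2,q)$, hence meets $\C$ in at most two points; and a line through $U_\infty$ cuts out a second conic $\mathcal D_s$ which in characteristic $2$ is a \emph{repeated line} (its equation is a perfect square), hence also meets $\C$ in at most two points. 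The decisive characteristic-$2$ phenomenon is this degeneration of $\mathcal D_s$ --- the analogue of the odd-characteristic pencil-of-conics analysis collapsing --- not a trace obstruction tied to the discriminant. As written, your argument could not be completed along the lines you indicate without discovering this (or an equivalent) fact, so the heart of the theorem is missing.
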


To summarise, the main results of this article are Theorems~\ref{19} and Theorem~\ref{24}. This answers an open question   posed in \cite{aguglia} and listed in \cite{book} to determine the geometric structure of pedal sets for   Buekenhout-Metz unitals.  

%

\section{The feet of an orthogonal BM unital for $q$ odd}

 An orthogonal BM unital in $\PG(2,q^2)$  has form $\U_{\alpha\beta}$ defined in Result~\ref{Uab}. 
To determine the structure of the pedal sets of $\U_{\alpha\beta}$ when $q$ is odd, we use the following result.

\begin{result}\Label{21} \cite{bake92}  Let $\Uab$ be an orthogonal BM unital in $\PG(2,q^2)$, $q$ odd.  The subgroup $G$ of $\PGammaL(3,q^2)$   that fixes $T_\infty$ and fixes $\U_{\alpha\beta}$   acts transitively on the points of $\U_{\alpha\beta}\setminus\{T_\infty\}$, acts transitively on the points of $\li\setminus\{T_\infty\}$, and has one or two orbits on the remaining points of $\PG(2,q^2)$. 
 \end{result}
 
 Let $\Uab$ be a non-classical  orthogonal BM unital in $\PG(2,q^2)$, $q$ odd. We work with  the following basis for $\Fqq$. Choose $\{1,\epsilon\}$ as basis for $\Fqq$ over $\Fq$ where $\zeta$ is a primitive element of $\Fqq$, $\epsilon=\zeta^{(q+1)/2}$, so $w=\epsilon^2$ is a primitive element of $\Fq$ and $\epsilon^q=-\epsilon$.

 The points  with coordinates $ \eone=(0,\epsilon,1)$, $ \etwo=(0,w\epsilon,1)$ are not on $\Uab$. Moreover, if there are two orbits on the points of $\PG(2,q^2)\setminus\{\U_{\alpha\beta}\cup\li\}$, then $R_1,R_2$  lie in different orbits. 
In the next two sections, we   determine the structure of $\pedal(\eone)$ and $\pedal(\etwo)$.
 
\subsection{The feet of $ \eone=(0,\epsilon,1)$, $q$ odd}

\begin{lemma}\Label{10}
\begin{enumerate}
\item The feet of $ \eone=(0,\epsilon,1)$ are the points with coordinates given by  $Q_x=(x,2\alpha x^2+(\beta-\beta^q)x^{q+1}+\epsilon ,1)$ where $x\in\Fqq$ satisfies $ \alpha x^2-\alpha^qx^{2q}+(\beta-\beta^q)x^{q+1}+2\epsilon=0.$
\item  The points of $\pedal(\eone)$ lie on the lines of the Baer pencil  joining the point vertex $U_\infty=(1,0,0)$ to the Baer subline $\{   E_{s-\epsilon}=(0,s-\epsilon,1) ,|\,s \in \Fq\cup\{\infty\}\}$.
\end{enumerate}
\end{lemma}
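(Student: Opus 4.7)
The plan is to translate the tangency of the line through $\eone$ and a point of $\Uab$ into an algebraic equation, reduce it via the Frobenius criterion to an equation over $\Fq$, and then read off the stated formulas; Part~2 follows as a direct consequence.

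For Part~1, let $P = P_{x, r} \in \Uab$ with $x \in \Fqq^*$. I parametrize the line $\eone P$ by affine points $(\mu x, \epsilon + \mu(y - \epsilon), 1)$ for $\mu \in \Fqq$, where $y$ denotes the second coordinate of $P$. Such a point lies on $\Uab$ iff $\epsilon(1 - \mu) + \alpha x^2 \mu(1 - \mu) + \beta x^{q+1} \mu(1 - \mu^q) + \mu r \in \Fq$. Applying the criterion $z \in \Fq \iff z = z^q$ and substituting $\mu = 1 + \nu$ (so $\nu = 0$ corresponds to $P$) yields $A\nu - A^q \nu^q - \alpha x^2 \nu^2 + \alpha^q x^{2q} \nu^{2q} + (\beta^q - \beta) x^{q+1} \nu^{q+1} = 0$ with $A = -\alpha x^2 - \epsilon + \beta^q x^{q+1} + r$. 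Each term has shape $z - z^q \in \Fq \cdot \epsilon$, so dividing by $\epsilon$ and writing $\nu = u + v\epsilon$ with $u, v \in \Fq$ produces an $\Fq$-equation $L(u, v) + Q(u, v) = 0$, where $L$ is linear, $Q$ is homogeneous quadratic, and there is no constant term. The line $\eone P$ is tangent at $P$ iff $(u, v) = (0, 0)$ is the unique $\Fq$-solution of this equation.

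For $q$ odd, I argue that uniqueness forces $L \equiv 0$ and $Q$ anisotropic. The projectivization of $L + Q = 0$ in $\PG(2, q)$ passes through the origin $(0, 0, 1)$; unless the associated conic splits over $\Fqq$ as two Galois-conjugate lines meeting at the origin, standard counts for non-degenerate conics and $\Fq$-line configurations show it has at least two affine $\Fq$-points. The conjugate-pair case forces the $z$-coefficients to vanish, i.e., $L \equiv 0$. Writing out $L \equiv 0$ gives $r = \alpha x^2 + \epsilon - \beta^q x^{q+1}$, and imposing $r \in \Fq$ becomes precisely $\alpha x^2 - \alpha^q x^{2q} + (\beta - \beta^q) x^{q+1} + 2\epsilon = 0$. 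Substituting back into $y = \alpha x^2 + \beta x^{q+1} + r$ yields $y_x = 2\alpha x^2 + (\beta - \beta^q) x^{q+1} + \epsilon$, as claimed. A direct computation shows that the discriminant of the residual $Q$ is (up to a factor $4$) equal to $x^{2(q+1)} d$; since $x^{2(q+1)} \in \Fq$ is a nonzero square and $d$ is a nonsquare by Result~\ref{Uab}, $Q$ is anisotropic as required. Finally, expanding $x = u + v\epsilon$, the condition on $x$ simplifies to an anisotropic $\Fq$-quadratic form of discriminant $d$ equated to a nonzero scalar, hence has exactly $q + 1$ solutions; this matches the $q + 1$ tangents from the non-unital point $\eone$, so these $Q_x$ account for all the feet.

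For Part~2, I compute $y_x + \epsilon = 2\alpha x^2 + (\beta - \beta^q) x^{q+1} + 2\epsilon$ and observe that $(y_x + \epsilon) - (y_x + \epsilon)^q = 2[\alpha x^2 - \alpha^q x^{2q} + (\beta - \beta^q) x^{q+1} + 2\epsilon] = 0$ by the defining equation for $x$, so $s := y_x + \epsilon \in \Fq$. The foot $Q_x = (x, s - \epsilon, 1)$ therefore lies on the line joining $U_\infty = (1, 0, 0)$ to $E_{s - \epsilon} = (0, s - \epsilon, 1)$, a line of the stated Baer pencil. The main technical obstacle is the bookkeeping in the Frobenius-reduction step and the accurate identification of the coefficients of $L$ and $Q$; once those are obtained, the anisotropy check and the count of feet are straightforward consequences of the discriminant condition in Result~\ref{Uab}.
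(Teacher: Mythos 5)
Your proposal is correct, but Part~1 follows a genuinely different route from the paper. The paper simply quotes the explicit coordinates of the tangent line $t_{x,r}$ at $P_{x,r}$ from \cite[Lemma 4.16]{book}, writes down the incidence condition $\eone\in t_{x,r}$ (namely $\epsilon+\alpha x^2-\beta^qx^{q+1}=r$), and combines it with $r=r^q$ to obtain the constraint on $x$; this is a two-line computation. You instead rederive the tangency criterion from first principles: you parametrise the line $\eone P_{x,r}$, reduce the membership condition to an $\Fq$-equation $L(u,v)+Q(u,v)=0$ through the origin, and argue via the classification of plane conics over $\Fq$ that the origin is the unique solution exactly when $L\equiv 0$ and $Q$ is anisotropic. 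I checked the key computations: your expression $z-z^q=A\nu-A^q\nu^q-\alpha x^2\nu^2+\alpha^qx^{2q}\nu^{2q}+(\beta^q-\beta)x^{q+1}\nu^{q+1}$ is right, $L\equiv0$ is equivalent to $A=0$ (which is the paper's equation for $r$), the residual discriminant is indeed $4x^{2(q+1)}d$ so anisotropy is automatic, and your Part~2 agrees with the paper's (your $s=y_x+\epsilon$ equals the paper's $s=\alpha x^2+\alpha^qx^{2q}$ via the defining equation). Your approach buys self-containedness (no appeal to the tangent-line formula) and, via the final count of $q+1$ solutions against the $q+1$ tangents, a completeness argument that also silently disposes of the cases $x=0$ and $T_\infty$, which you otherwise exclude without comment; the cost is the degenerate-conic case analysis and considerably more bookkeeping. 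The paper's method is shorter and is the one reused verbatim in Lemmas~\ref{13} and~\ref{15}.
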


\begin{proof} The coordinates of the tangents to $\U_{\alpha\beta}$ are stated in \cite[Lemma 4.16]{book}. The tangent line $t_{x,r}$ to $\Uab$  at the point with coordinates $  P_{x,r}=(x,\ \alpha x^2+\beta x^{q+1}+r,\ 1)$, $x\in\Fqq$, $r\in\Fq$,   has coordinates 
\begin{equation}
  t_{x,r}=[-2\alpha x+(\beta^q-\beta )x^q,\ 1, \ \alpha x^2-\beta^qx^{q+1}-r].
\Labele{eqn:tgt}
\end{equation}
 The point 
 $\eone$ lies on the tangent $t_{x,r}$ iff
\begin{equation}
\epsilon+\alpha x^2-\beta^qx^{q+1}=r.
\Labele{eqn:f1}
\end{equation}
Raising to the power of $q$ and recalling $\epsilon^q=-\epsilon$  gives 
\begin{equation}
-\epsilon +\alpha^q x^{2q}-\beta x^{q+1}=r^q.
\Labele{eqn:f1b}
\end{equation}
 As $r\in\Fq$, we have $r=r^q$, so  equating  (\ref{eqn:f1}) and (\ref{eqn:f1b}) gives the condition
\begin{equation}
  \alpha x^2-\alpha^qx^{2q}+(\beta-\beta^q)x^{q+1}+2\epsilon=0.\Labele{eqn:f2}
\end{equation}
As $t_{x,r}\cap\Uab=(x,\ \alpha x^2+\beta x^{q+1}+r,\ 1)$, and points in $\pedal(\eone)$  satisfy (\ref{eqn:f1}) and (\ref{eqn:f2}), we conclude that   points  in $\pedal(\eone)$ have   coordinates given by 
\begin{equation}
  Q_x=(x,2\alpha x^2+(\beta-\beta^q)x^{q+1}+\epsilon ,1),\ \ x\in\Fqq \textup{ satisfying (\ref{eqn:f2})}. \Labele{eqn:f3}
\end{equation}
 Points on the line $T_\infty R_1$ have coordinates $E_k=(0,k,1)$ for $k\in\Fqq\cup\{\infty\}$. Thus lines through the point $U_\infty=(1,0,0)$ have form $\ell_k =U_\infty E_k$, so have coordinates  $\ell_k=[0,-1,k]$ for $k\in\Fqq\cup\{\infty\}$. 
 The point    $Q_{x}$ with coordinates given in (\ref{eqn:f3}) lies on the line $\ell_k$ if and only if
$
  (2\alpha x^2+(\beta-\beta^q)x^{q+1}+\epsilon )-k =0
$
  which   by (\ref{eqn:f2}) holds iff $k=\alpha x^2+\alpha^qx^{2q}-\epsilon$. Let $s= \alpha x^2+\alpha^qx^{2q}$, so $k=s-\epsilon$ and $s^q=s$, so $s\in\Fq$. 
    
 Thus each point $Q_x$ with $x\in\Fqq$   satisfying (\ref{eqn:f2}) lies on a line $\ell_k$ with $k=s-\epsilon$ for some $s\in\Fq$. Thus the points of $\pedal(\eone)$ are contained in the lines of the given Baer pencil. 
\end{proof}

\begin{lemma}\Label{18} Let $\ell$ be a line through $U_\infty $. 
 \begin{enumerate}
 \item If $\alpha$ is a  nonsquare in $\Fqq$, then $\ell\cap\pedal(\eone)$ has size $0$ or $2$. 
 \item If $\alpha$ is a nonzero square in $\Fqq$,   then $\ell\cap\pedal(\eone)$ has size $0$, $2$ or $4$.
 Further, there exists  at least  $\frac{q-3}4$  4-secants, and every 4-secant  passes through the point $U_\infty$.  
    \end{enumerate}
\end{lemma}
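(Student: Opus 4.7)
The plan is to recast the pedal-incidence problem as the intersection of two affine conics over $\Fq$ and to control that intersection via a square-class identity involving $\alpha^{q+1}$. Writing $x = a + c\epsilon$ with $a, c \in \Fq$, and $\alpha = \alpha_0 + \alpha_1\epsilon$, $\beta = \beta_0 + \beta_1\epsilon$, the foot equation (\ref{eqn:f2}) becomes
\begin{equation*}
\phi(a, c) := (\alpha_1 + \beta_1) a^2 + 2\alpha_0 ac + w(\alpha_1 - \beta_1) c^2 = -1,
\end{equation*}
and the incidence $Q_x \in \ell_{s-\epsilon}$ becomes $\psi(a, c) = s$ for the quadratic form $\psi(a, c) = 2\alpha_0(a^2 + wc^2) + 4w\alpha_1 ac$. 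Since the involution $(a, c) \mapsto (-a, -c)$ preserves both equations and no solution sits at the origin, solutions pair up; combined with B\'ezout's bound of $4$, this already yields $|\ell \cap \pedal(\eone)| \in \{0, 2, 4\}$.

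To sharpen this I would replace $\psi = s$ by the equivalent homogeneous equation $\psi + s\phi = 0$. A short computation shows this binary form has discriminant $\Delta(s)/4$, where $\Delta(s) = 4(ds^2 - 16w\alpha^{q+1})$, so its zero set is two distinct $\Fq$-lines $L_1, L_2$ through the origin when $\Delta(s)$ is a nonzero square, a double line when $\Delta(s) = 0$, and only the origin when $\Delta(s)$ is a nonsquare. The form $\phi$ itself is anisotropic (its discriminant is $d$, a nonsquare), so on each nontrivial line $L_i$ with direction vector $v_i$ the equation $\phi = -1$ reduces to $t^2 = -1/\phi(v_i)$ and contributes $0$ or $2$ pedal points, according as $-\phi(v_i)$ is a nonsquare or a square in $\Fq^*$.

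The heart of the argument, and the main technical obstacle, is the identity
\begin{equation*}
\phi(v_1)\phi(v_2) \equiv \alpha^{q+1} \pmod{(\Fq^*)^2}
\end{equation*}
in the split case. I would establish it through the resultant identity $\phi(p_1, 1)\phi(p_2, 1) \equiv N_{\Fqq/\Fq}(g(r_1))$ modulo squares, where $p_1, p_2 \in \Fq$ are the slopes of $L_1, L_2$ (the roots of the quadratic $g$ underlying $\psi + s\phi$) and $r_1, r_1^q \in \Fqq$ are the conjugate roots of $\phi(p, 1)$; a direct calculation exploiting $d = 4(\alpha^{q+1} + w\beta_1^2)$ reduces the relevant norm to $4dw\alpha^{q+1}/P^2$ with $P = \alpha_1 + \beta_1$, which is $\alpha^{q+1}$ modulo squares because $dw$ is a square (a product of two nonsquares). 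Granted this identity, the case split is immediate. If $\alpha$ is a nonsquare in $\Fqq$, then $16w\alpha^{q+1}/d$ is a nonsquare, so $\Delta(s) = 0$ has no $\Fq$-solution, and $\phi(v_1)\phi(v_2)$ being a nonsquare forces exactly one of $L_1, L_2$ to contribute, giving $|\ell \cap \pedal(\eone)| \in \{0, 2\}$. If $\alpha$ is a nonzero square in $\Fqq$, then the two split lines contribute equally (producing sizes $0$ or $4$), while each of the two values of $s$ with $\Delta(s) = 0$ contributes $0$ or $2$; writing $|\pedal(\eone)| = q + 1 = 4k_+ + 2k_0$ with $k_0 \leq 2$ then yields $k_+ \geq (q-3)/4$ 4-secants, all of which sit among lines through $U_\infty$ by construction.
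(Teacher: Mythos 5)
Your proposal is correct, and while it opens with the same reduction as the paper (expanding over $\Fq$ to get the anisotropic form $\phi$, i.e.\ the paper's conic $\mathcal C$, the form $\psi$, i.e.\ the conic $\mathcal D_s$, and the $\pm$ parity argument), the decisive step is handled by a genuinely different mechanism. The paper invokes Dickson's criterion: $\mathcal C\cap\mathcal D_s$ has four points iff the pencil $\mathcal C+\lambda\mathcal D_s$ has three distinct degenerate members, and then computes $\det(A_{\mathcal C_\lambda})$ to see that the two extra degenerate members exist iff $4\lambda^2 w\alpha^{q+1}=d/4$ is solvable, i.e.\ iff $\alpha$ is a square in $\Fqq$. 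You instead take the one degenerate member you can see for free --- your binary form $\psi+s\phi$ is exactly the paper's $\mathcal C_{1/s}$, the line pair through $(0,0,1)$ on which all intersection points must lie --- split it into lines $L_1,L_2$, and decide how many of them actually carry feet via the square class of $\phi(v_1)\phi(v_2)$. Your key identity checks out: $\phi(v_1)\phi(v_2)\equiv \mathrm{Res}(\phi,\psi)\pmod{(\Fq^*)^2}$, and a direct computation gives $\mathrm{Res}(\phi,\psi)=16w\alpha^{q+1}(\alpha^{q+1}+w\beta_1^2)=4wd\,\alpha^{q+1}\equiv\alpha^{q+1}$ since $wd$ is a product of two nonsquares; your discriminant $\Delta(s)=4(ds^2-16w\alpha^{q+1})$ also agrees with the determinant factor in the paper. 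What your route buys is independence from the classification of conic pencils, an $s$-independent invariant (so the dichotomy ``both lines or exactly one line contributes'' is uniform over all split $s$), and the extra information of \emph{which} line carries the feet; the paper's route is somewhat shorter once Dickson is granted. Both then finish with the same count $q+1=4k_++2k_0$, $k_0\le 2$. Two small points to tidy in a full write-up: the resultant manipulation assumes the relevant leading coefficients are nonzero (note $\alpha_1+\beta_1\ne 0$ is automatic since $\phi$ is anisotropic, and the value of $s$ with $2\alpha_0+s(\alpha_1+\beta_1)=0$ needs the reversed-polynomial form of the same identity), and the claim that every $4$-secant passes through $U_\infty$ ultimately also uses the companion statement (Lemma~\ref{17}) that lines off $U_\infty$ meet $\pedal(\eone)$ in at most two points --- the same division of labour the paper uses.
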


\begin{proof} By  Lemma~\ref{10},  points of $\pedal(\eone)$ have coordinates $Q_x=(x,2\alpha x^2+(\beta-\beta^q)x^{q+1}+\epsilon ,1)$ where $x\in\Fqq$ satisfies (\ref{eqn:f2}).
Using the basis $\{1,\e\}$ for $\Fqq$ over $\Fq$, 
 we write  $x=x_0+x_1\epsilon$, $\alpha=a_0+a_1\epsilon$, $\beta=b_0+b_1\epsilon$  for unique $x_0,x_1,a_0,a_1,b_0,b_1\in\Fq$. Substituting into (\ref{eqn:f2})  using $\epsilon^q=-\e$ and $\e^2=w \in\Fq$ 
 gives the equation
 $2\epsilon\big((a_1+b_1)x_0^2+w(a_1-b_1)x_1^2+2a_0x_0x_1+1\big)=0$. Dividing by $2\epsilon$ gives an equation with coefficients in $\Fq$. It 
   is the equation of a conic $\mathcal C$ in $\PG(2,q)$
 which has symmetric matrix given by 
 \[
    A_{\mathcal C}=\begin{pmatrix}
    a_1+b_1&a_0&0\\
    a_0&w(a_1-b_1)&0\\
    0&0&1
    \end{pmatrix}.
\]
Recall that $\Uab$ has discriminant $d=(\beta^q-\beta)^2+4\alpha^{q+1}$ a nonsquare in $\Fq$, so in particular $d\neq 0$. Substituting $\alpha=a_0+a_1\epsilon$, $\beta=b_0+b_1\epsilon$ into this expression for $d$ (using $\epsilon^q=-\e$ and $\e^2=w$) gives $d=4a_0^2-4a_1^2w-4b_1^2w\neq0$. Hence
 $\det(A_{\mathcal C})=-d/4$, so $\det(A_{\mathcal C})\neq0$ and 
  $\mathcal C$ is non-degenerate. 
That is, the point $Q_x=(x,2\alpha x^2+(\beta-\beta^q)x^{q+1}+\epsilon ,1)$ lies in $\pedal(\eone)$ iff $x\in\Fqq$ satisfies (\ref{eqn:f2}),   iff
the point $F_x=(x_0,x_1,1)$ in $\PG(2,q)$ lies on the non-degenerate conic $\mathcal C$ of $\PG(2,q)$. 
  
Let $\ell$ be a line through the point $U_\infty $ and suppose $|\ell\cap\pedal(\eone)|>0$; we show that $\ell$ contains exactly $2$ or $4$ points of $\pedal(\eone)$. We can write $\ell=U_\infty E_k$ for some point $E_k=(0,k,1)$, $k\in\Fqq$. 
By the proof of Lemma~\ref{10}, 
the point $Q_x$ in $\pedal(\eone)$ lies on $\ell=U_\infty E_k$ iff $k=s-\epsilon$ and $s=\alpha x^2+\alpha^qx^{2q}$.   
 Substituting   $x=x_0+x_1\epsilon$, $\alpha=a_0+a_1\epsilon$
into $s=\alpha x^2+\alpha^qx^{2q}$ gives the 
equation 
$s=2a_0x_0^2+2wa_0x_1^2 +4wa_1x_0x_1.  
$ As $s\in\Fq$, all coefficients lie in $\Fq$, so this is a conic 
   $\mathcal D_s$  in $\PG(2,q)$ with matrix 
    \[
    A_{ {\mathcal D}_s}=\begin{pmatrix}
    2a_0&2wa_1&0\\
    2wa_1&2wa_0&0\\
    0&0&-s
    \end{pmatrix}.
    \]
That is, the point  $Q_x=(x,2\alpha x^2+(\beta-\beta^q)x^{q+1}+\epsilon ,1)$ in $\pedal(\eone)$ lies on the line  $U_\infty E_{s-\epsilon }$ iff $s=\alpha x^2+\alpha^qx^{2q}$. That is, in $\PG(2,q)$  the point
$F_x =(x_0,x_1,1)$ lies on the  conic $ {\mathcal D}_s$.
Hence the number of points of $\pedal (\eone)$ on the line $\ell=U_\infty E_{s-\epsilon }$ equals the number of points on the intersection of the two conics $\mathcal C$ and $\mathcal D_s$. Thus lines through $U_\infty $ meet $\pedal(\eone)$ in at most four points.

Note that $F_x\in\mathcal C$ iff $F_{-x}\in\mathcal C$; and $F_x\in\mathcal D_s$ iff $F_{-x}\in\mathcal D_s$.
As $F_0 =(0,0,1)$ is not in $\mathcal C$, the intersection $\mathcal C \cap  {\mathcal D}_s$ has an even number of points. That is, every line through $U_\infty $  contains $0$, $2$ or $ 4$ points of $\pedal(\eone)$.

The number of points in the intersection of two conics was  considered by Dickson \cite{dickson}. As $\mathcal C$ is non-degenerate, $\mathcal C$ and $\mathcal D_s$ meet in four distinct points iff the 
 pencil of conics $\mathcal C_\lambda=\mathcal C+\lambda {\mathcal D}_s$ for $\lambda \in \Fq \cup \{\infty\}$  contains three distinct degenerate conics.
 The conic $\mathcal C_\lambda$ has matrix 
     \[
    A_{\mathcal C_\lambda}=A_{\mathcal C}+\lambda A_{{\mathcal D}_s}=\begin{pmatrix}
    (a_1+b_1)+2a_0\lambda&a_0+2wa_1\lambda&0\\
    a_0+2wa_1\lambda&w(a_1-b_1)+2wa_0\lambda&0\\
    0&0&1-\lambda s
    \end{pmatrix}
    \]
    which has determinant
$
  \big(4\lambda^2w(a_0^2-wa_1^2)-(a_0^2-wa_1^2+wb_1^2)\big)(1-\lambda s).
$
As $\mathcal C$ is non-degenerate, this equation is not identically zero, so there are three roots  for $\lambda\in\Fq\cup\{\infty\}$ for which $\det(\mathcal C_\lambda)=0$. One root is $\lambda =1/s$, the other two are roots of 
\begin{eqnarray}\Labele{e5}
4\lambda^2w(a_0^2-wa_1^2)&=&(a_0^2-wa_1^2+wb_1^2).
\end{eqnarray}
The right hand side of (\ref{e5}) is the expansion of $d/4$, which by Result~\ref{Uab} is a nonsquare in $\Fq$. 
The left hand side of (\ref{e5}) is the expansion of $ 4\lambda^2w\alpha^{q+1}$ (using $\alpha=a_0+a_1\epsilon$). 
We determine when this is a nonsquare  in $\Fq$.
Recall that $\zeta$ is a primitive element of $\Fqq$, $\epsilon=\zeta^{(q+1)/2}$, and $w=\epsilon^2$ is a primitive element of $\Fq$. 
Hence $w$ is  a nonsquare in $\Fq$ and $\alpha=\zeta^i$ for some $i\in\{1,\ldots,q^2-1\}$. Hence $\alpha^{q+1}=w^i$.
Thus $\alpha^{q+1}$ is a square in $\Fq$ iff $\alpha$ is a square in $\Fqq$. Hence the left hand side  of (\ref{e5}) is a nonsquare  in $\Fq$ iff $\alpha$ is a square in $\Fqq$. 
Hence (\ref{e5}) has two roots for $\lambda$ in $\Fq$ iff $\alpha$ is a square in $\Fqq$. In this case, the roots are nonzero, so are distinct and are denoted $\pm\lambda_1$. 

Suppose $\alpha$ is a nonsquare in $\Fqq$, then we conclude that the pencil of conics $\mathcal C_\lambda$, $\lambda \in\Fq \cup \{\infty\}$  contains exactly one degenerate conic, namely $\mathcal C_{1/s}$, and so  $\mathcal C$ and ${\mathcal D}_s$ cannot meet in four points. Thus the line $\ell=U_\infty E_{s-\epsilon }$ contains $ 2$ points of $\pedal(\eone)$. We conclude that every line  through $U_\infty $ meets $\pedal(\eone)$ in either $0$ or $2$ points.

Suppose 
$\alpha$ is a square in $\Fqq$, then there are three values (possibly repeated)  for which $\mathcal C_\lambda$ is degenerate, namely $\lambda =1/s,\pm\lambda_1$.  As $\lambda_1\neq0$, there are either two or three distinct values. As $s$ varies in $\Fq$, there are at most two values of $s$ which give two distinct roots, the remaining values of $s$ give three distinct roots. Thus at most two lines of form $U_\infty E_{s-\epsilon }$ meet $\pedal(\eone)$ in two points.   It follows that when $q \equiv 1 \pmod4$, there is one line  through $U_\infty $ meeting $\pedal(\eone)$ in $2$ points and $\frac{q-1}{4}$  lines  through $U_\infty $ meeting $\pedal(\eone)$ in $4$ points;  and when 
$q \equiv 3 \pmod4$, there is either zero or two lines  through $U_\infty $ meeting $\pedal(\eone)$ in $2$ points and $\frac{q+1}{4}$ or $\frac{q-3}{4}$  lines  through $U_\infty $ meeting $\pedal(\eone)$ in $4$ points.
        \end{proof}

\begin{lemma}\Label{17} If $\ell$ is a line of $\PG(2,q^2)$ not through $U_\infty $, then $\ell\cap\pedal(\eone)$ has size $0$, $1$ or $2$. 
\end{lemma}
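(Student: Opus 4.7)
The plan is to reduce the statement to a line--conic intersection in $\PG(2,q)$, re-using the bijection $Q_x \leftrightarrow F_x$ between $\pedal(\eone)$ and the non-degenerate conic $\mathcal{C}$ established in the proof of Lemma~\ref{18}.

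Let $\ell$ have line coordinates $[A,B,C]$. Since $\ell$ does not pass through $U_\infty=(1,0,0)$ we have $A\neq 0$. First dispose of the easy case $B=0$: then $\ell:Ax+Cz=0$ forces any affine point $(x,y,1)\in\ell$ to have $x=-C/A$, so at most one foot $Q_x$ lies on $\ell$. Otherwise rescale so that $B=1$. By Lemma~\ref{10}, a foot $Q_x=(x,y(x),1)$ with $y(x)=2\alpha x^2+(\beta-\beta^q)x^{q+1}+\epsilon$ lies on $\ell$ exactly when
\begin{equation*}
Ax+2\alpha x^2+(\beta-\beta^q)x^{q+1}+\epsilon+C=0,\qquad(\ast)
\end{equation*}
and $Q_x\in\pedal(\eone)$ exactly when $x$ satisfies~(\ref{eqn:f2}).

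Subtracting~(\ref{eqn:f2}) from~$(\ast)$ cancels the $x^{q+1}$ term and yields
\begin{equation*}
Ax+\alpha x^2+\alpha^q x^{2q}+C-\epsilon=0.\qquad(\mathrm{I})
\end{equation*}
The crucial observation is that $\alpha x^2+\alpha^q x^{2q}=\mathrm{Tr}_{q^2/q}(\alpha x^2)\in\Fq$. Writing $x=x_0+x_1\epsilon$, $A=A_0+A_1\epsilon$, $C=C_0+C_1\epsilon$ with all components in $\Fq$, and using $\epsilon^q=-\epsilon$, the $\epsilon$-component of~$(\mathrm{I})$ is the purely linear equation
\begin{equation*}
A_1x_0+A_0x_1+(C_1-1)=0,
\end{equation*}
which defines a genuine line $L$ of $\PG(2,q)$ since $A\neq 0$ forces $(A_0,A_1)\neq(0,0)$.

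By Lemma~\ref{18}, $Q_x\in\pedal(\eone)$ iff $F_x=(x_0,x_1,1)$ lies on the non-degenerate conic $\mathcal{C}$ of $\PG(2,q)$. Hence every point of $\ell\cap\pedal(\eone)$ corresponds to a point of $\mathcal{C}\cap L$, and a line meets a non-degenerate conic of $\PG(2,q)$ in at most two points, giving $|\ell\cap\pedal(\eone)|\leq 2$. The main subtlety I anticipate is simply spotting the right linear combination $(\mathrm{I})=(\ast)-(\ref{eqn:f2})$ that manufactures the trace $\alpha x^2+\alpha^q x^{2q}\in\Fq$; once this is done, splitting into $\Fq$-parts automatically delivers a linear condition on $(x_0,x_1)$ and sidesteps any analysis of the intersection of three conics in $\PG(2,q)$.
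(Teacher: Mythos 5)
Your proof is correct and follows essentially the same route as the paper: both treat lines through $T_\infty$ (your case $B=0$) separately, then use (\ref{eqn:f2}) to eliminate the $x^{q+1}$ term from the incidence condition and split over the basis $\{1,\epsilon\}$ to reduce to a line--conic intersection in $\PG(2,q)$. The only (mild) difference is that you pair the line $L$ coming from the $\epsilon$-component with the non-degenerate conic $\mathcal{C}$ of Lemma~\ref{18}, whereas the paper pairs it with the conic arising from the $\Fq$-component of the same equation; your choice makes the bound $|\mathcal{C}\cap L|\leq 2$ immediate since $\mathcal{C}$ cannot contain a line.
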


\begin{proof}   
Let $\ell$ be a line through 
$T_\infty=(0,1,0)$, so $\ell$ has coordinates $[u,0,v]$ for some $u,v\in\Fqq$, not both zero. 
By Lemma~\ref{10}, points in $\pedal(\eone)$   have coordinates of form  $Q_x=(x,2\alpha x^2+(\beta-\beta^q)x^{q+1}+ \epsilon,1)$, with $x\in\Fqq$ satisfying (\ref{eqn:f2}).  The point $Q_x$ lies on $\ell$ if  $ux+v=0$.  That is, $\ell$ contains at most one point of   $\pedal(\eone)$. Moreover, there do exist 1-secants of $\pedal(\eone)$.

Let $\ell$ be a line   not through $U_\infty $ or $T_\infty$, so $\ell$ has  coordinates $[u,1,v]$, $u,v\in\Fqq$, $u,v\neq0$.  The point $Q_x$ lies on $\ell$ if 
$ux+2\alpha x^2+(\beta-\beta^q)x^{q+1}+ \epsilon+v=0$,   that is, by  (\ref{eqn:f2}), $$ux+ax^2+a^qx^{2q}-\epsilon +v=0.$$ 
We expand by writing $u=u_0+u_1\epsilon$, $v=v_0+v_1\epsilon$, $x=x_0+x_1\epsilon$, $\alpha=a_0+a_1\epsilon$, $\beta=b_0+b_1\epsilon$  for unique $u_0,\ldots,b_1\in\Fq$; and equating coefficients of the basis $\{1,\epsilon\}$ to get two equations:
\begin{eqnarray*}
2a_0x_0^2+2a_0wx_1^2+4a_1wx_0x_1+u_0x_0+u_1wx_1+v_0&=&0\\
u_1x_0+u_0x_1+v_1-1&=&0.
\end{eqnarray*}
The first equation is the equation of a conic in $\PG(2,q)$ 
and the second is the equation of a line in $\PG(2,q)$. Hence there are at most two points $F_x=(x_0,x_1,1)$ in their intersection in $\PG(2,q)$. Thus there are at most two values of $x$ for which the point $Q_x$ lies on the line $\ell$, that is, $|\ell\cap\pedal(\eone)|\leq 2$  as required. 
\end{proof}

\begin{lemma} \Label{16a}
\begin{enumerate}
\item If $\alpha$ is a nonsquare in $\Fqq$, then $\pedal(\eone)$ is an arc. 
\item If $\alpha$ is a nonzero square in $\Fqq$,   then $\pedal(\eone)$ is a set of class $(0,1,2,4)$. Further, there exists  at least  $\frac{q-3}4$  4-secants, and every 4-secant  passes through the point $U_\infty$. 
\end{enumerate}
\end{lemma}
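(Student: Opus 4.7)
My plan is to consolidate Lemmas~\ref{18} and~\ref{17} by splitting the lines of $\PG(2,q^2)$ into two classes, those through the Baer pencil vertex $U_\infty$ and those not through $U_\infty$, and reading off the possible intersection sizes with $\pedal(\eone)$ from each lemma.

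For part~(1), when $\alpha$ is a nonsquare in $\Fqq$, Lemma~\ref{18}(1) gives intersection sizes in $\{0,2\}$ for lines through $U_\infty$, and Lemma~\ref{17} gives sizes in $\{0,1,2\}$ for the remaining lines. Hence every line of $\PG(2,q^2)$ meets $\pedal(\eone)$ in at most two points, so this set of $q+1$ points is a $(q+1)$-arc.

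For part~(2), when $\alpha$ is a nonzero square in $\Fqq$, Lemma~\ref{18}(2) gives intersection sizes in $\{0,2,4\}$ for lines through $U_\infty$ together with at least $\frac{q-3}{4}$ 4-secants through $U_\infty$, while Lemma~\ref{17} still caps lines not through $U_\infty$ at two points. Combining, the possible intersection sizes lie in $\{0,1,2,4\}$, and any $4$-secant must contain $U_\infty$, since otherwise Lemma~\ref{17} would force at most two intersection points. The lower bound of $\frac{q-3}{4}$ on the number of $4$-secants, and the fact that they all pass through $U_\infty$, transfer directly from Lemma~\ref{18}(2). To upgrade ``type $(0,1,2,4)$'' to ``class $(0,1,2,4)$'', I would verify that each of the four sizes is actually realised: $0$-secants exist trivially by counting; $1$-secants exist by the concluding remark in the proof of Lemma~\ref{17} (lines through $T_\infty$); $4$-secants exist by Lemma~\ref{18}(2); and $2$-secants exist either through $U_\infty$ (in the cases of Lemma~\ref{18}(2) producing them) or, failing that, as the join of two points of $\pedal(\eone)$ lying on distinct lines of the Baer pencil through $U_\infty$, such a join being bounded above by $2$ thanks to Lemma~\ref{17}.

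Since the proof is essentially a bookkeeping exercise on top of Lemmas~\ref{17} and~\ref{18}, I do not anticipate any conceptual obstacle. The only mild care needed is in witnessing all four intersection sizes in the ``class'' statement of part~(2), particularly the existence of a $2$-secant; but this is handled by a pigeonhole over the multiple $4$-secants through $U_\infty$ guaranteed by Lemma~\ref{18}(2).
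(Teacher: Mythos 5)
Your proposal is correct and follows essentially the same route as the paper: the paper's proof simply combines Lemmas~\ref{18} and \ref{17} and invokes the existence of $1$-secants noted in the proof of Lemma~\ref{17} to justify the ``class'' statement. Your extra care in exhibiting a $2$-secant (via the join of two feet on distinct pencil lines when no $2$-secant through $U_\infty$ exists) is a small point the paper leaves implicit, but it does not change the approach.
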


\begin{proof} This follows immediately from Lemmas~\ref{18} and \ref{17}. Note that the proof of Lemma~\ref{17} shows that $\pedal(\eone)$ has 1-secants, so in case 2, when $\alpha$ is a  nonzero square in $\Fqq$, each of the intersection numbers $0,1,2,4$ does occur. 
\end{proof}

\subsection{The feet of $\etwo=(0,w \epsilon ,1)$, $q$ odd}

This case is very similar to the one above. Here we will consider the feet of a point $\etwo=(0,w \epsilon ,1)$ where $w= \epsilon^2$.

\begin{lemma}\Label{13} 
\begin{enumerate}
\item The feet of $\etwo=(0,w \epsilon ,1)$ are the points with coordinates given  by $Q_x'=(x,2\alpha x^2+(\beta-\beta^q)x^{q+1}+w \epsilon ,1)$, with $x\in\Fqq$ satisfying $\alpha x^2-\alpha^qx^{2q}+(\beta-\beta^q)x^{q+1}+2w \epsilon =0.$

\item The points of $\pedal(\etwo)$ lie on the lines of the Baer pencil  joining the point vertex $U_\infty =(1,0,0)$ to the Baer subline $\{ E_{s-w\epsilon}=(0,s-w\epsilon,1) ,|\,s \in \Fq\cup\{\infty\}\}$.
\end{enumerate}
\end{lemma}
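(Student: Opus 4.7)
The plan is to mimic the proof of Lemma \ref{10} verbatim, replacing $\epsilon$ by $w\epsilon$ everywhere. The key observation that makes the substitution work is that $w\in\Fq$, so $(w\epsilon)^q=w\epsilon^q=-w\epsilon$; this is the only property of $\epsilon$ that was used in the analogous argument in Lemma \ref{10}. There should be no genuine obstacle: everything is a direct translation.

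For part (1), I would begin by recalling the tangent line coordinates from equation (\ref{eqn:tgt}):
\[
t_{x,r}=[-2\alpha x+(\beta^q-\beta)x^q,\ 1,\ \alpha x^2-\beta^q x^{q+1}-r].
\]
Substituting the point $R_2=(0,w\epsilon,1)$ into this tangent condition gives
\[
w\epsilon+\alpha x^2-\beta^q x^{q+1}=r.
\]
Raising to the $q$th power and using $(w\epsilon)^q=-w\epsilon$ yields
\[
-w\epsilon+\alpha^q x^{2q}-\beta x^{q+1}=r^q.
\]
Since $r\in\Fq$, we have $r=r^q$; subtracting the two equations produces the defining condition
\[
\alpha x^2-\alpha^q x^{2q}+(\beta-\beta^q)x^{q+1}+2w\epsilon=0.
\]
Substituting $r$ back into $P_{x,r}=(x,\alpha x^2+\beta x^{q+1}+r,1)$ collapses the middle coordinate to $2\alpha x^2+(\beta-\beta^q)x^{q+1}+w\epsilon$, yielding exactly $Q_x'$.

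For part (2), I would parametrise the line $T_\infty R_2$ by its affine points $E_k=(0,k,1)$ for $k\in\Fqq\cup\{\infty\}$, so that lines through $U_\infty=(1,0,0)$ take the form $\ell_k=U_\infty E_k=[0,-1,k]$. The point $Q_x'$ lies on $\ell_k$ precisely when
\[
k=2\alpha x^2+(\beta-\beta^q)x^{q+1}+w\epsilon,
\]
and using the constraint from part (1) to eliminate $(\beta-\beta^q)x^{q+1}$ gives
\[
k=\alpha x^2+\alpha^q x^{2q}-w\epsilon.
\]
Setting $s=\alpha x^2+\alpha^q x^{2q}$, we see $s^q=s$ so $s\in\Fq$, and therefore $k=s-w\epsilon$ with $s\in\Fq$. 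Hence every foot of $R_2$ lies on some line of the Baer pencil with vertex $U_\infty$ through the Baer subline $\{E_{s-w\epsilon}=(0,s-w\epsilon,1)\mid s\in\Fq\cup\{\infty\}\}$, completing the proof.
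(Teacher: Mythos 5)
Your proposal is correct and follows exactly the paper's own argument, which likewise proves Lemma~\ref{13} by repeating the proof of Lemma~\ref{10} with $\epsilon$ replaced by $w\epsilon$ and using $(w\epsilon)^q=-w\epsilon$. All the computations (the tangency condition, the $q$th-power trick, the collapse of the middle coordinate, and the elimination giving $k=s-w\epsilon$ with $s=\alpha x^2+\alpha^qx^{2q}\in\Fq$) check out.
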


\begin{proof} The proof is similar to the proof of Lemma~\ref{10}.  The point 
$\etwo$ lies on the tangent $t_{x,r}$ iff 
$
  w \epsilon +\alpha x^2-\beta^qx^{q+1}-r =0.
$
Raising to the power of $q$ and
  using $r=r^q$ gives the condition
\begin{equation}
  \alpha x^2-\alpha^qx^{2q}+(\beta-\beta^q)x^{q+1}+2w \epsilon =0.\Labele{eqn:g2}
\end{equation}
We conclude that points of $\pedal(\etwo)$ are the points with coordinates 
\begin{equation}
Q_x'=(x,2\alpha x^2+(\beta-\beta^q)x^{q+1}+w \epsilon ,1), x\in\Fqq \textup{ satisfying (\ref{eqn:g2}).}\Labele{eqn:g3}
\end{equation}
Recall $E_k=(0,k,1)$, $k\in\Fqq\cup\{\infty\}$ and $\ell_k=U_\infty E_{k }$.
The point    $Q'_{x}$   lies on the line $\ell_k$ if and only if
 $k=\alpha x^2+\alpha^qx^{2q}-w\epsilon=s-w\epsilon$ where  $s= \alpha x^2+\alpha^qx^{2q}\in\Fq$.  
 Thus each point $Q_x'$ with $x\in\Fqq$   satisfying (\ref{eqn:g2}) lies on a line $\ell_k$ with $k=s-w\epsilon$ for some $s\in\Fq$. Thus the points of $\pedal(\etwo)$ are contained in the   lines of the given Baer pencil. 
\end{proof}

 \begin{lemma} \Label{16b}
\begin{enumerate}
\item If $\alpha$ is a nonsquare in $\Fqq$, then $\pedal(\etwo)$ is an arc. 
\item If $\alpha$ is a nonzero square in $\Fqq$,  then $\pedal(\etwo)$ is a set of class $(0,1,2,4)$. Further, there exists  at least  $\frac{q-3}4$  4-secants, and every 4-secant  passes through the point $U_\infty =(1,0,0)$. 
\end{enumerate}
\end{lemma}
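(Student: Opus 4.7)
The plan is to mirror the argument of Lemma~\ref{16a} using the coordinates from Lemma~\ref{13}: first I would prove the analogue of Lemma~\ref{18} for lines through $U_\infty$, then the analogue of Lemma~\ref{17} for lines not through $U_\infty$, and combine them. For the analogue of Lemma~\ref{18}, write $x=x_0+x_1\e$, $\alpha=a_0+a_1\e$, $\beta=b_0+b_1\e$ in equation~(\ref{eqn:g2}). Using $\e^q=-\e$ and $\e^2=w$, the expansion is identical to the one in the proof of Lemma~\ref{18} except that the constant term $2\e$ is replaced by $2w\e$. Dividing by $2\e$ yields a conic $\mathcal C'$ in $\PG(2,q)$ whose matrix agrees with $A_{\mathcal C}$ except that the $(3,3)$-entry is $w$ in place of $1$, so $\det(A_{\mathcal C'})=-wd/4\neq 0$ and $\mathcal C'$ is non-degenerate. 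The conic $\mathcal D_s$ encoding the condition $s=\alpha x^2+\alpha^q x^{2q}$ is unchanged because its definition is independent of $\etwo$. Hence the number of points of $\pedal(\etwo)$ on the line $U_\infty E_{s-w\e}$ equals $|\mathcal C'\cap\mathcal D_s|$, which is at most $4$ and always even, since $F_0=(0,0,1)$ lies outside $\mathcal C'$ (its constant term $w$ is nonzero) and both conics are invariant under $x\mapsto -x$.

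The pencil-of-conics computation then transfers. The matrix of $\mathcal C'_\lambda=\mathcal C'+\lambda\mathcal D_s$ has the same top-left $2\times 2$ block as the original pencil and $(3,3)$-entry $w-\lambda s$, so cofactor expansion gives $\det(A_{\mathcal C'_\lambda})=(w-\lambda s)M(\lambda)$, where $M(\lambda)$ is precisely the factor from the proof of Lemma~\ref{18}. The degenerate members of the pencil therefore occur at $\lambda=w/s$ and at the roots of equation~(\ref{e5}), and the square/nonsquare analysis of $\alpha^{q+1}$ in $\Fq$ is unchanged. The same counting over $s\in\Fq$ shows that at most two lines through $U_\infty$ meet $\pedal(\etwo)$ in two points when $\alpha$ is a nonzero square in $\Fqq$, giving at least $\frac{q-3}{4}$ $4$-secants through $U_\infty$, while if $\alpha$ is a nonsquare every line through $U_\infty$ is a $0$- or $2$-secant.

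For the analogue of Lemma~\ref{17}, lines through $T_\infty$ are determined by the first coordinate $x$, so contain at most one $Q'_x$; and for a line $[u,1,v]$ through neither $U_\infty$ nor $T_\infty$, substituting the parametrisation of $Q'_x$, using (\ref{eqn:g2}) to eliminate the $x^{q+1}$ term, and splitting into $\{1,\e\}$-components produces a conic and a line in $\PG(2,q)$, with intersection of size at most $2$. Combining these with the previous step as in the proof of Lemma~\ref{16a} yields the claim. The only non-routine point is to check that the substitution $2\e\mapsto 2w\e$ merely multiplies $\det(A_{\mathcal C'})$ by the nonzero element $w$ and leaves equation~(\ref{e5}) unchanged; once this is verified, no new phenomena arise relative to the $\eone$ case.
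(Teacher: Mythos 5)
Your proposal is correct and follows essentially the same route as the paper: the paper's proof likewise replaces the $(3,3)$-entry of $A_{\mathcal C}$ by $w$, keeps $\mathcal D_s$ unchanged, obtains $\det(A_{\mathcal C_\lambda})=\big(4\lambda^2w(a_0^2-wa_1^2)-(a_0^2-wa_1^2+wb_1^2)\big)(w-\lambda s)$, and reruns the Lemma~\ref{18} and Lemma~\ref{17} arguments verbatim. Your verification that the change $2\epsilon\mapsto 2w\epsilon$ only rescales the determinant by $w$ and leaves equation~(\ref{e5}) untouched is exactly the point the paper leaves implicit.
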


\begin{proof} An argument similar to the proof of Lemma~\ref{17} shows that every line not through $U_\infty$ contains $0$, $1$ or $2$ points of $\pedal(\etwo)$. 

Let $\ell$ be a line through $U_\infty$ with $|\ell\cap\pedal(\etwo)|>0$. We use a similar argument to the proof of Lemma~\ref{18}. In this case, the conic $\mathcal C$ has equation  $(a_1+b_1)x_0^2+w(a_1-b_1)x_1^2+2a_0x_0x_1+wx_2^2=0$, the conic $\mathcal D_s$ has equation $2a_0x_0^2+2wa_0x_1^2 -sx_2^2+4wa_1x_0x_1=0$, so
\[
   A_{\mathcal C_\lambda}=A_{\mathcal C}+\lambda A_{{\mathcal D}_s}=\begin{pmatrix}
    (a_1+b_1)+2a_0\lambda&a_0+2wa_1\lambda&0\\
    a_0+2wa_1\lambda&w(a_1-b_1)+2wa_0\lambda&0\\
    0&0&w-\lambda s
    \end{pmatrix}
    \]
and
$\det(\mathcal C_\lambda)=    [2^2\lambda^2w(a_0^2-wa_1^2)-(a_0^2-wa_1^2+wb_1^2)](w-\lambda s)$.
The remaining computations are similar to the proof of Lemma~\ref{18}. 
\end{proof}

    \subsection{Proof of Theorem~\ref{19}}

\emph{Proof of Theorem~\ref{19}.}\ \  Let $\Uab$ be a non-classical BM unital in $\PG(2,q^2)$, $q$ odd.  By Result~\ref{21}, the group $G$ has either one or two orbits on the points of $\PG(2,q^2)\setminus\{\Uab,\li\}$. If there are two orbits, then the points $\eone=(0,\epsilon,1)$, $\etwo=(0,w\epsilon,1)$ lie in different orbits. 
Let $P$ be a point, $P\notin\Uab\cup\li$, so $P$ lies in   the same orbit as either $\eone$ or $\etwo$ (or both if there is only one orbit). 
The Baer pencil property of $\pedal(P)$ follows from Lemmas~\ref{10} and \ref{13}. The structure of 
$\pedal(P)$
   follows from Lemmas~\ref{16a} and \ref{16b}. This completes the proof of Theorem~\ref{19}.
\hfill$\square$

  \subsection{A special case: the feet of a conic-BM unital}

 There is a special class of orthogonal BM unitals in $\PG(2,q^2)$ which can be expressed as a union of conics. These were independently found by Baker and Ebert \cite{bake90} and Hirschfeld and Sz\H{o}nyi \cite{hirs}, called \emph{conic-BM unitals}. Coordinates for conic-BM unitals are determined in \cite{bake92}: the unital $\Uab$ contains a non-degenerate conic iff $\beta\in\Fq$, in which case $\Uab$ is  the union of $q$ non-degenerate conics pairwise meeting in the point $T_\infty=(0,1,0)$.  
 
The next result shows that $\beta\in\Fq$ implies that $\alpha$ is a nonsquare in $\Fqq$. The converse is not true, there are unitals with $\alpha$   a nonsquare in $\Fqq$ that are not conic-BM unitals.

\begin{lemma}\Label{11} Suppose $q$ odd and let $\alpha,\beta\in\Fqq$ satisfy the discriminant condition of Result~\ref{Uab}.  
If $\beta\in\Fq$, then $\alpha$ is a nonsquare in $\Fqq$.
\end{lemma}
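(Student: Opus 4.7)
The plan is to unwind the discriminant condition under the hypothesis $\beta \in \Fq$, and then translate ``$\alpha^{q+1}$ a nonsquare in $\Fq$'' into ``$\alpha$ a nonsquare in $\Fqq$'' using the primitive element argument that already appears in the proof of Lemma~\ref{18}.

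First I would observe that if $\beta\in\Fq$, then $\beta^q=\beta$, so $(\beta^q-\beta)^2=0$ and the discriminant simplifies to $d=4\alpha^{q+1}$. Since $4=2^2$ is a square in $\Fq$ (recall $q$ is odd), and Result~\ref{Uab} asserts that $d$ is a nonsquare in $\Fq$, it follows at once that $\alpha^{q+1}$ is a nonsquare in $\Fq$. In particular $\alpha\ne 0$.

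Next I would invoke the chosen basis: $\zeta$ is a primitive element of $\Fqq$ and $w=\zeta^{q+1}$ is a primitive element of $\Fq$. Writing $\alpha=\zeta^i$ for some $i\in\{1,\dots,q^2-1\}$, we have $\alpha^{q+1}=w^i$. Since $w$ is a primitive element of $\Fq$, $w^i$ is a square in $\Fq$ iff $i$ is even, which is exactly the condition that $\alpha=\zeta^i$ is a square in $\Fqq$. Combining with the previous paragraph, $\alpha^{q+1}$ being a nonsquare in $\Fq$ forces $i$ to be odd, so $\alpha$ is a nonsquare in $\Fqq$, as required.

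There is no real obstacle here: both ingredients (the simplification of $d$ when $\beta\in\Fq$, and the equivalence ``$\alpha^{q+1}$ square in $\Fq$ iff $\alpha$ square in $\Fqq$'') have already been recorded in the paper. The proof is essentially a one-line deduction from Result~\ref{Uab} together with the norm observation extracted from the proof of Lemma~\ref{18}.
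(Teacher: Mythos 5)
Your proposal is correct and follows essentially the same route as the paper's proof: reduce the discriminant to $4\alpha^{q+1}$ when $\beta\in\Fq$, then use the fact that $\alpha^{q+1}$ is a square in $\Fq$ iff $\alpha$ is a square in $\Fqq$ (which the paper simply cites from the proof of Lemma~\ref{18}, and which you correctly re-derive via $\alpha=\zeta^i$ and $w=\zeta^{q+1}$). No gaps.
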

\begin{proof}
The discriminant condition is that $d=(\beta^q-\beta)^2+4\alpha^{q+1}$ is a nonsquare in $\Fq$.  If $\beta\in\Fq$ then $\beta=\beta^q$ and the discriminant condition becomes $\alpha^{q+1}$ a nonsquare in $\Fq$. The result follows as  $\alpha^{q+1}$ is a square in $\Fq$ iff $\alpha$ is square in $\Fqq$ (this statement is proved in the proof of Lemma~\ref{18}).
\end{proof}

If $\U$ is a conic-BM unital, then $\U=\Uab$ with $\beta\in\Fq$, so by Lemma~\ref{11},  $\alpha$ is a nonsquare in $\Fqq$. Hence by Theorem~\ref{19}, the pedal set for each point $P\notin\Uab\cup\li$ is an arc. 
Although pedal sets for a conic-BM unital is a corollary of the general case, there is a direct proof in this case that is much shorter. We include this short proof here. The motivation for including the proof is to highlight the difference in this case. It seems possible that there is additional geometrical structure in this case that we have not yet found.

\begin{theorem}
Let  $\U$ be a conic-BM unital wrt $\li$, then  for all $Q\notin\U\cup\li$,  $\pedal(Q)$ is an arc.
\end{theorem}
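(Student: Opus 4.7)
The plan is to exploit the specialisation $\beta\in\Fq$ (which by Lemma~\ref{11} also forces $\alpha$ to be a nonsquare in $\Fqq$) to obtain very simple coordinates for the feet, after which the arc property falls out of a single Vandermonde determinant, bypassing the pencil-of-conics analysis of Lemmas~\ref{18} and~\ref{16b}.

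First, I would invoke Result~\ref{21}: the subgroup $G$ fixing $\U$ and $T_\infty$ has at most two orbits on $\PG(2,q^2)\setminus(\U\cup\li)$, and when there are two, these orbits contain $\eone=(0,\epsilon,1)$ and $\etwo=(0,w\epsilon,1)$ respectively. Because $G$ preserves $\U$, pedal sets of points in the same $G$-orbit are carried onto each other by collineations and hence are projectively equivalent, so it suffices to prove that $\pedal(\eone)$ and $\pedal(\etwo)$ are arcs.

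Next, I would specialise Lemmas~\ref{10} and~\ref{13} to $\beta\in\Fq$. Since $\beta-\beta^q=0$, the $x^{q+1}$ term vanishes both in the defining equation and in the pedal point coordinates, yielding
\[
\pedal(\eone)=\{\,Q_x=(x,\,2\alpha x^2+\epsilon,\,1)\mid x\in\Fqq,\ \alpha x^2-\alpha^q x^{2q}+2\epsilon=0\,\},
\]
and analogously $\pedal(\etwo)$ with $\epsilon$ replaced by $w\epsilon$. Since the first coordinate of $Q_x$ is $x$, distinct values of $x$ produce distinct feet.

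For any three distinct feet $Q_{x_1},Q_{x_2},Q_{x_3}$ of $\eone$, subtracting $\epsilon$ times column three from column two leaves the determinant of homogeneous coordinates unchanged and unmasks a Vandermonde:
\[
\det\begin{pmatrix}x_1&2\alpha x_1^2+\epsilon&1\\x_2&2\alpha x_2^2+\epsilon&1\\x_3&2\alpha x_3^2+\epsilon&1\end{pmatrix}=2\alpha(x_2-x_1)(x_3-x_1)(x_3-x_2)\neq 0,
\]
so no three feet of $\eone$ are collinear. The identical argument with $\epsilon$ replaced by $w\epsilon$ handles $\pedal(\etwo)$, and together with the orbit reduction this completes the proof.

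There is essentially no technical obstacle: the only point needing verification is the collapse of the pedal coordinates to the Vandermonde configuration $(x,2\alpha x^2,1)$ when $\beta\in\Fq$, whereupon the arc property is immediate. The brevity of this approach is exactly the motivation for including it separately, and it suggests that the conic-BM case should admit a more geometric explanation (perhaps via the pencil of $q$ conics comprising $\U$) that the authors note has not yet been uncovered.
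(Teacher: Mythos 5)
Your proof is correct, but it takes a genuinely different route from the one in the paper. The paper's argument never touches the explicit pedal coordinates of Lemmas~\ref{10} and~\ref{13}: it normalises $\beta=0$, uses the transitivity of $G$ on $\Uab\setminus\{T_\infty\}$ to place one foot at $(0,0,1)$, and then plays the collinearity determinant of three feet against the concurrency determinant of the corresponding three tangent lines, reaching the contradiction $2\alpha xy(x-y)=0$. You instead normalise the \emph{external} point via the orbit structure of Result~\ref{21}, specialise the pedal coordinates of Lemmas~\ref{10} and~\ref{13} to $\beta\in\Fq$ so that the $x^{q+1}$ terms vanish, and finish with a Vandermonde determinant. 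Both reductions are legitimate (the paper itself uses your orbit reduction in the proof of Theorem~\ref{19}), and your column-operation computation is right, including the fact that $\alpha\neq 0$ for a conic-BM unital since otherwise the discriminant would vanish. What your version buys is a genuinely geometric explanation that the paper's computation hides: after the column operation, your calculation is exactly the statement that the feet all lie on the non-degenerate conic $X_1X_2=2\alpha X_0^2+\epsilon X_2^2$ of $\PG(2,q^2)$ (respectively with $\epsilon$ replaced by $w\epsilon$), and any subset of a non-degenerate conic is an arc. This is worth making explicit, since the authors remark after the theorem that there may be additional geometric structure in the conic-BM case that they have not identified; containment of the pedal set in a conic of $\PG(2,q^2)$ is precisely such structure, and it is invisible in the paper's contradiction argument. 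What the paper's version buys in exchange is self-containedness: it needs only the tangent-line formula~(\ref{eqn:tgt}) and transitivity on $\Uab\setminus\{T_\infty\}$, not the full pedal parametrisation.
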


\begin{proof} Let $\U$ be a conic-BM unital, then by $\cite{bake92}$, $\U$ is projective equivalent to $\Uab$ with $\beta\in\Fq$. Moreover, we can without loss of generality let $\beta=0$.
 That is,   let $\U=\U_{\alpha0}$ for some $\alpha\in\Fqq$ with discriminant $d=4\alpha^{q+1}$ a nonsquare in $\Fq$. 
Let $Q$ be a point not in $\U$ and not on $\li$. Let $A,B,C$ be three points in $\pedal(Q)$ and suppose $A,B,C$ are collinear. We work to a contradiction. Coordinates of points of $\U$ are given in Result~\ref{Uab}.
As the group that fixes $T_\infty$ and $\U$ is transitive on the points of $\U\setminus\{T_\infty\}$, without loss of generality, let $A=P_{0,0}$, and $B=P_{x,r}$, $C=P_{y,s}$ for some $x,y\in\Fqq$, $r,s\in\Fq$. That is, $  A=(0,0,1)$, $  B=(x, \alpha x^2+r,1)$, $  C=(y, \alpha y^2+s,1)$. If $x=y$, then $T_\infty,B,C$ lie on the line with coordinates $[1,0,x]$. By Result~\ref{12}, the points of $\U$ on this line are the pedal set of a point on $\li$, a contradiction as we have supposed the tangents to $\U$ at $B,C$ meet in $Q\notin\li$. So $x\neq y$, and a similar argument shows $x,y\neq0$. 

As $A,B,C$ are collinear, the matrix with rows $  A,  B,   C$ has determinant $0$, and so
$x(\alpha y^2+s)-y(\alpha x^2+r)=0$. We rearrange this to 
\begin{eqnarray}\Labele{e1}
xs-yr&=& y \alpha x^2- x \alpha y^2.
\end{eqnarray}
The equations of the tangents to $\U$ at $A,B,C$ can be computed using (\ref{eqn:tgt})   and have coordinates $[0,1,0]$, $[-2\alpha x, 1,  \alpha x^2-r]$, $[-2\alpha y, 1,  \alpha y^2-s]$ respectively. As these lines are concurrent at $Q$, the matrix with these three coordinates as rows has determinant $0$, and so 
$ 2\alpha x(\alpha y^2-s)- 2\alpha y(\alpha x^2-r)=0$. We rearrange to get $xs-yr=x\alpha y^2-y\alpha x^2$. Equating with (\ref{e1}) gives $2(y \alpha x^2- x \alpha y^2)=0$ and so $2\alpha x y (x-y)=0$. This is a contradiction as we have $q$ odd, $\alpha\neq0$, $x\neq y$ and $x,y\neq 0$. 
We conclude that $A,B,C$ cannot be collinear, and so $\pedal(Q)$ is an arc. 
\end{proof}

\section{The feet of an orthogonal BM unital for $q$ even}

An orthogonal BM unital in $\PG(2,q^2)$  has form $\U_{\alpha\beta}$ defined in Result~\ref{Uab}. To determine the structure of the pedal sets of $\U_{\alpha\beta}$ when $q$ is even, we use the following result.

\begin{result}\Label{21even} \cite{eber92} Let $\Uab$ be an orthogonal BM unital in $\PG(2,q^2)$, $q$ even.  Let $G$ be the subgroup of $\PGammaL(3,q^2)$ that fixes $T_\infty$ and fixes $\U_{\alpha\beta}$. Then $G$ acts transitively on the points of $\U_{\alpha\beta}\setminus\{T_\infty\}$, acts transitively on the points of $\li\setminus\{T_\infty\}$, and has one  orbit on the remaining points of $\PG(2,q^2)$. 
\end{result}

 Let $\Uab$ be a non-classical orthogonal BM unital in $\PG(2,q^2)$, $q$ even.  
We work with the following basis for $\Fqq$. Choose $\{1,\delta\}$ as basis for $\Fqq$ over $\Fq$ where $q\geq 4$, $\delta\in\Fqq\setminus\Fq$, $\delta^q=1+\delta$ and $\delta^2=v+\delta$ where $v\neq1$ and $T(v)=1$ (where   $T(v)=v+v^2+\cdots+v^{2^{h-1}}$ is the \emph{absolute trace} of $v$ in $\Fq$, $q=2^h$). 

We use the point with coordinates $\ee=(0,\delta,1)$ as our representative for a point not on $\Uab$ or on  $\li$. 
 In the next section we determine the structure of $\pedal(\ee)$. 
 
 \subsection{The feet of $\ee=(0,\delta,1)$, $q$ even}

\begin{lemma}\Label{15} 
\begin{enumerate}
\item The feet of $\ee=(0,\delta,1)$ are the points with coordinates given by  $Q_x=(x, \alpha  x^2+\alpha^qx^{2q}+\delta^q,1)$, where $x\in\Fqq$ satisfies $1+\alpha  x^2+\alpha^qx^{2q}+(\beta +\beta ^q)x^{q+1}=0.$
\item   The points of $\pedal(\ee)$ are contained in the lines of the Baer pencil joining the point vertex $U_\infty=(1,0,0)$ to the   Baer subline $\{E_{s+\delta^q}= (0,s+\delta^q,0) \,|\, s \in \Fq\cup\{\infty\} \}$.
\end{enumerate}
\end{lemma}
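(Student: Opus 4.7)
The plan is to follow the template of Lemma~\ref{10} very closely, replacing the characteristic-zero identity $\epsilon^q = -\epsilon$ (which produced the $2\epsilon$ term in~(\ref{eqn:f2})) with the characteristic-two identity $\delta + \delta^q = 1$ coming from our chosen basis, since $\delta^q = 1 + \delta$. The starting point is again the tangent line formula~(\ref{eqn:tgt}) from \cite[Lemma 4.16]{book}, which in characteristic 2 collapses to $t_{x,r} = [(\beta + \beta^q)x^q,\ 1,\ \alpha x^2 + \beta^q x^{q+1} + r]$ because $-2\alpha x = 0$ and $\beta^q - \beta = \beta + \beta^q$.

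For part~(1), I first substitute $\ee = (0,\delta,1)$ into $t_{x,r}$ to obtain the incidence condition $r = \delta + \alpha x^2 + \beta^q x^{q+1}$. Raising this to the $q$-th power gives $r^q = \delta^q + \alpha^q x^{2q} + \beta x^{q+1}$, and then using $r = r^q$ (since $r\in\Fq$) together with $\delta + \delta^q = 1$ yields the required equation
\[
  1 + \alpha x^2 + \alpha^q x^{2q} + (\beta + \beta^q)x^{q+1} = 0.
\]
The foot is the tangency point $P_{x,r}$, whose $y$-coordinate is $\alpha x^2 + \beta x^{q+1} + r = \delta + (\beta + \beta^q)x^{q+1}$ after substituting the value of $r$; using the feet condition to eliminate $(\beta + \beta^q)x^{q+1}$ then gives $y = \delta + 1 + \alpha x^2 + \alpha^q x^{2q} = \delta^q + \alpha x^2 + \alpha^q x^{2q}$, as claimed.

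For part~(2), the line $\T_\infty\ee$ carries the points $E_k = (0,k,1)$ with $k\in\Fqq\cup\{\infty\}$, so lines through $U_\infty = (1,0,0)$ have coordinates $\ell_k = [0,1,k]$. Substituting $Q_x$ into $\ell_k$ forces $k = \alpha x^2 + \alpha^q x^{2q} + \delta^q$. Setting $s = \alpha x^2 + \alpha^q x^{2q}$, I note that $s^q = s$, so $s\in\Fq$, and hence every foot lies on a line $U_\infty E_{s+\delta^q}$ with $s\in\Fq$. As $\{s + \delta^q : s\in\Fq\cup\{\infty\}\}$ is a Baer subline of $\T_\infty\ee$, the Baer pencil claim follows.

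The computation is essentially routine; the only real subtlety is the characteristic~2 bookkeeping, in particular remembering that the constant term arising from raising the incidence condition to the $q$-th power is $\delta+\delta^q = 1$ rather than $0$ (which would be the naive analogue of $\epsilon + \epsilon^q$). This single change is exactly what makes the feet condition have a nonzero constant term $1$ in place of $2\epsilon$, and correspondingly forces the Baer subline to be centred at $\delta^q$ rather than at $\delta$ itself.
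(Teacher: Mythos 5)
Your proposal is correct and follows essentially the same route as the paper: the same tangent-line incidence condition, the same Frobenius trick with $r=r^q$ and $\delta+\delta^q=1$ to get the feet equation, the same elimination to obtain the coordinates of $Q_x$, and the same computation identifying the Baer pencil through $U_\infty$. The only cosmetic difference is that you obtain the even-$q$ tangent formula by reducing the odd-$q$ formula~(\ref{eqn:tgt}) modulo $2$, whereas the paper cites it directly from \cite[Lemma 4.27]{book}; the resulting formula is identical.
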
 

\begin{proof} The  tangent to $\Uab$ at the point $  P_{x,r}=(x,\ \alpha x^2+\beta x^{q+1}+r,\ 1)$ has coordinates  $  t_{x,r}=
[(\beta^q+\beta)x^q,\ 1,\ \alpha x^2+\beta^qx^{q+1}+r]$ (see \cite[Lemma 4.27]{book}).  
 The point 
 $\ee=(0,\delta,1)$ lies on the tangent $t_{x,r}$ iff
\begin{equation}
\delta+\alpha  x^2+\beta^qx^{q+1}=r.\Labele{even0}
\end{equation}
Raising to the power of $q$    gives 
\begin{equation}
\delta^q+\alpha  ^qx^{2q}+\beta x^{q+1}=r^q.\Labele{even0b}
\end{equation}
 As $r\in\Fq$, we have $r=r^q$, so  equating (\ref{even0}) and  (\ref{even0b}) gives the condition
\begin{equation}
  1+\alpha  x^2+\alpha^qx^{2q}+(\beta +\beta ^q)x^{q+1}=0.\Labele{even1}
\end{equation}
As $t_{x,r}\cap\Uab=(x,\ \alpha x^2+\beta x^{q+1}+r,\ 1)$, and 
points in $\pedal(\ee)$  satisfy  (\ref{even0b}) and (\ref{even1}), we conclude that the points in $\pedal(\ee)$ have    coordinates given by 
 \begin{equation}
Q_x=(x, \alpha  x^2+\alpha^qx^{2q}+\delta^q,1),\ \ x\in\Fqq \textup{ satisfying  (\ref{even1})}. \Labele{even3}
\end{equation}
 Points on the line $T_\infty \ee$ have coordinates $E_k=(0,k,1)$ for $k\in\Fqq\cup\{\infty\}$. Thus lines through the point $U_\infty=(1,0,0)$ have form $\ell_k =U_\infty E_k$, so have coordinates  $\ell_k=[0,1,k]$ for $k\in\Fqq\cup\{\infty\}$. 
 The point    $Q_{x}$ with coordinates given in (\ref{even3}) lies on the line $U_\infty E_{k }$ if and only if
$
  (\alpha  x^2+\alpha  ^qx^{2q}+\delta^q)+k =0,
$
that is, $k=\alpha x^2+\alpha^qx^{2q}-\delta^q$. Let $s= \alpha x^2+\alpha^qx^{2q}$, so $k=s+\delta^q$ and $s^q=s$, so $s\in\Fq$. 
 Thus each point $Q_x$ with $x\in\Fqq$   satisfying (\ref{even1}) lies on a line $\ell_k$ with $k=s+\delta^q$ for some $s\in\Fq$. Thus the points of $\pedal(\ee)$ are contained in the   lines of the given Baer pencil. 
\end{proof}

\begin{lemma}\Label{22} If $\ell$ is a line through $U_\infty$, then $\ell$ meets $\pedal(\ee)$ in at most two points. 
\end{lemma}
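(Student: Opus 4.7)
The plan is to combine the two conditions characterizing the points of $\pedal(\ee)$ on a line through $U_\infty$ into a single quadratic equation over $\Fqq$, exploiting the fact that in characteristic $2$ the norm-one subgroup of $\Fqq^*$ has odd order $q+1$ and the squaring map is a bijection.

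By Lemma~\ref{15}, a line through $U_\infty$ has the form $\ell=U_\infty E_{s+\delta^q}$ with $s\in\Fq\cup\{\infty\}$, and a point $Q_x$ lies in $\pedal(\ee)\cap\ell$ exactly when $x\in\Fqq$ satisfies both the feet constraint $1+\alpha x^2+\alpha^q x^{2q}+(\beta+\beta^q)x^{q+1}=0$ from (\ref{even1}) and the line condition $\alpha x^2+\alpha^q x^{2q}=s$. First I would substitute the line condition into the feet constraint to obtain $x^{q+1}=(1+s)/(\beta+\beta^q)=:N\in\Fq$; here $\beta+\beta^q\neq 0$ because the discriminant condition forces $\beta\notin\Fq$. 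So the problem reduces to counting the $x\in\Fqq$ satisfying both $x^{q+1}=N$ and $\alpha x^2+\alpha^q x^{2q}=s$.

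The case $N=0$ (equivalently $s=1$ in characteristic $2$) is quickly disposed of, since $x^{q+1}=0$ forces $x=0$, contradicting $\alpha x^2+\alpha^q x^{2q}=s=1$. For $N\neq 0$, I would fix any $z\in\Fqq^*$ with $z^{q+1}=N$ and parametrize solutions of the norm equation as $x=yz$ with $y$ in the norm-one subgroup, so that $y^q=y^{-1}$. Setting $\gamma=\alpha z^2$ (which is nonzero since $\U$ is non-classical, so $\alpha\neq 0$), the line condition becomes $\gamma y^2+\gamma^q y^{-2}=s$; multiplying through by $y^2$ and letting $u=y^2$ this rearranges to the quadratic
$$\gamma u^2+s\,u+\gamma^q=0$$
over $\Fqq$. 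This quadratic has at most two solutions in $\Fqq$, and since $q$ is even the map $y\mapsto y^2$ is a bijection on $\Fqq$, so at most two values of $y$ (and hence of $x=yz$, and hence of $Q_x$) give points of $\pedal(\ee)\cap\ell$.

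The main obstacle is recognizing that the system collapses so cleanly: the characteristic $2$ hypothesis is essential both for making the trace/line condition combine with the feet condition without any factor-of-$2$ issue of the kind that drives the odd-case analysis of Lemmas~\ref{18} and~\ref{16b}, and for ensuring that $y\mapsto y^2$ is injective so that the quadratic count in $u$ transfers directly to a count in $y$. The remainder of the argument is then routine bookkeeping.
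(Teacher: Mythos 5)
Your argument is correct, and it reaches the bound by a genuinely different route from the paper. The paper's proof expands $x=x_0+x_1\delta$ over $\Fq$ and translates the two constraints into a non-degenerate conic $\mathcal C$ and a second conic $\mathcal D_s$ in $\PG(2,q)$; the whole point of even characteristic there is that $\mathcal D_s$ is a perfect square, hence a repeated line, which meets $\mathcal C$ in at most two points. You instead stay inside $\Fqq$: eliminating $\alpha x^2+\alpha^qx^{2q}$ between (\ref{even1}) and the line condition pins down the norm $x^{q+1}$, and the norm-fibre parametrisation $x=yz$ turns the residual condition into the quadratic $\gamma u^2+su+\gamma^q=0$ with $u=y^2$; injectivity of squaring in characteristic $2$ then transfers the two-root bound back to $x$. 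The auxiliary facts you invoke all check out: $\beta+\beta^q\neq0$ because the discriminant condition forces $\beta\notin\Fq$, $\alpha\neq0$ because the unital is non-classical, surjectivity of the norm supplies the base point $z$, and the case $N=0$ is correctly excluded. Two cosmetic points only: not every line through $U_\infty$ has the form $U_\infty E_{s+\delta^q}$, but any other line through $U_\infty$ contains no feet by Lemma~\ref{15}(2), so the bound is trivial there; and a root $u$ of your quadratic need not come from a norm-one $y$, but that only helps since you want an upper bound. Your route is shorter and avoids the conic machinery entirely; what the paper's version buys is uniformity with the odd-order case of Lemma~\ref{18}, where the same pair $(\mathcal C,\mathcal D_s)$ appears and the contrast --- $\mathcal D_s$ non-degenerate for $q$ odd versus a repeated line for $q$ even --- explains exactly why $4$-secants arise in one case and not the other.
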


\begin{proof}
By  Lemma~\ref{15},  points of $\pedal(\ee)$ have coordinates $Q_x=(x, \alpha  x^2+\alpha  ^qx^{2q}+\delta^q,1)$ where $x\in\Fqq$ satisfies (\ref{even1}). 
Using the basis $\{1,\delta\}$ for $\Fqq$ over $\Fq$, 
 we write  $x=x_0+x_1\delta$, $\alpha=a_0+a_1\delta$, $\beta=b_0+b_1\delta$  for unique $x_0,x_1,a_0,a_1,b_0,b_1\in\Fq$. 
 Substituting into (\ref{even1})  using  
$\delta^q=1+\delta$ and $\delta^2=v+\delta$,
and writing as a homogeneous equation in $(x_0,x_1,x_2)$ gives the following which is the equation of a conic  in $\PG(2,q)$, denoted $\C$:
\begin{equation}\label{C19} (a_1+b_1)x_0^2+b_1x_0x_1+(a_0+a_1+a_1v+b_1v)x_1^2+x_2^2=0.\end{equation}
As $\beta\notin\Fq$, we have $b_1\neq0$. So the conic $\mathcal C$ has nucleus   $N=(0,0,b_1)$ which does not lie in $\mathcal C$, so $\mathcal C$ is non-degenerate. Thus the point $Q_x$ lies in $\pedal(\ee)$ if in $\PG(2,q)$, the point $F_x=(x_0,x_1,1)$  lies on the non-degenerate conic $\mathcal C$.

Let $\ell$ be a line through the point $U_\infty $ and suppose $|\ell\cap\pedal(\ee)|>0$; we show that $\ell$ contains exactly $2$ or $4$ points of $\pedal(\ee)$. We can write $\ell=U_\infty E_k$ for some point $E_k=(0,k,1)$, $k\in\Fqq$. 
By the proof of  Lemma~\ref{15}, 
the point $Q_x$ in $\pedal(\ee)$ lies on $\ell=U_\infty E_k$ iff $k=s+\delta^q$ and $s=\alpha x^2+\alpha^qx^{2q}$.   
 Substituting   $x=x_0+x_1\epsilon$, $\alpha=a_0+a_1\epsilon$
into $s=\alpha x^2+\alpha^qx^{2q}$ gives the 
equation  $s = a_1x_0^2+(a_0+a_1+a_1v)x_1^2$. This is the equation of a conic $\mathcal D_s$ in $\PG(2,q)$.
Thus the point $Q_x$ lies on the line $\ell$ if the point $F_x=(x_0,x_1,1)$  lies on the   conic $\mathcal D_s$. That is, the number of points in $\pedal(\ee)$ that lie on the line $\ell$ equals the number of points in the intersection $\mathcal C\cap\mathcal D_s$. 
The conic $\mathcal D_s$ has homogeneous equation  $a_1x_0^2+(a_0+a_1+a_1v)x_1^2+sx_2^2=0$. As $q$ is even, this equation  factorises as  $(\sqrt{a_1}x_0+\sqrt{a_0+a_1+a_1v}x_1+\sqrt sx_2)^2$, so $\mathcal D_s$ is a repeated line. Hence $\mathcal D_s$ meets $\mathcal C$ in at most two distinct points  in $\PG(2,q)$.
Hence the number of points of $\pedal(\ee)$ on the line $\ell=U_\infty E_k$ is at most two. 
\end{proof}

\begin{lemma}\Label{23} If $\ell$ is a line not through $U_\infty$, then $\ell$ meets $\pedal(\ee)$ in at most two points. 
\end{lemma}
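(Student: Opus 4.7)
The plan is to adapt the proof of Lemma~\ref{17} to characteristic $2$, splitting into two subcases according to whether $\ell$ passes through $T_\infty=(0,1,0)$.

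\textbf{Subcase 1.} First I would treat the lines through $T_\infty$ (but not through $U_\infty$). Such a line has coordinates $[u,0,v']$ with $u\in\Fqq\setminus\{0\}$, and using the form of $Q_x$ from Lemma~\ref{15} the incidence $Q_x\in\ell$ reduces to $ux+v'=0$, which has the unique solution $x=v'/u$. Hence $\ell\cap\pedal(\ee)$ has size at most $1$.

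\textbf{Subcase 2.} Next I would take $\ell$ to pass through neither $T_\infty$ nor $U_\infty$, so $\ell=[u,1,v']$ with $u\in\Fqq\setminus\{0\}$ and $v'\in\Fqq$; the condition $Q_x\in\ell$ then becomes
\[
ux + \alpha x^2 + \alpha^q x^{2q} + \delta^q + v' = 0.
\]
I would expand everything in the basis $\{1,\delta\}$, writing $u=u_0+u_1\delta$, $v'=v'_0+v'_1\delta$, $x=x_0+x_1\delta$ and $\alpha=a_0+a_1\delta$, and use $\delta^q=1+\delta$ together with the identity $\alpha x^2+\alpha^q x^{2q}=a_1 x_0^2+(a_0+a_1+a_1v)x_1^2\in\Fq$ already established in the proof of Lemma~\ref{22}. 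Equating the coefficients of $1$ and $\delta$ yields two $\Fq$-equations: a quadratic in $(x_0,x_1)$ (from the $1$-part) and the linear equation
\[
u_1 x_0 + (u_0+u_1) x_1 + (1+v'_1) = 0
\]
(from the $\delta$-part). Since $u\neq 0$, at least one of $u_1$ and $u_0+u_1$ is nonzero, so this equation defines a genuine line $L$ in $\PG(2,q)$.

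To finish I would invoke the main observation of Lemma~\ref{22}: $x$ satisfies the pedal condition (even1) iff $F_x=(x_0,x_1,1)$ lies on the non-degenerate conic $\C$ defined by equation~(\ref{C19}). Consequently $\ell\cap\pedal(\ee)$ is in bijection with a subset of $\C\cap L$, and since $\C$ is non-degenerate this intersection has at most $2$ points; hence $|\ell\cap\pedal(\ee)|\le 2$.

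The main obstacle is just the careful characteristic-$2$ bookkeeping in the basis expansion. The conceptual saving compared to the proof of Lemma~\ref{22} is that one can discard the quadratic equation arising from the $1$-component altogether: the non-degenerate pedal conic $\C$ combined with the line $L$ extracted from the $\delta$-component already bounds the number of intersection points by~$2$, so there is no need to analyze a pencil of conics as in the through-$U_\infty$ case.
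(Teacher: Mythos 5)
Your proof is correct and takes essentially the same route as the paper's: the same split into lines through $T_\infty$ versus lines through neither $T_\infty$ nor $U_\infty$, the same extraction of a line in $\PG(2,q)$ from the $\delta$-component of the incidence condition, and the same conclusion by intersecting that line with the non-degenerate conic $\C$ of (\ref{C19}). (A minor remark: your linear equation $u_1x_0+(u_0+u_1)x_1+(1+v_1')x_2=0$ correctly carries the $u_1x_1$ term arising from $\delta^2=v+\delta$, which the paper's equation (\ref{25}) omits; this discrepancy affects neither argument, since in both cases $u\neq 0$ guarantees a genuine line.)
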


\begin{proof}
Let $\ell$ be a line through 
$T_\infty=(0,1,0)$, so $\ell$ has coordinates $[u,0,v]$ for some $u,v\in\Fqq$, not both zero. 
By  Lemma~\ref{15}, points in $\pedal(\ee)$   have coordinates of form  $Q_x=(x, \alpha  x^2+\alpha  ^qx^{2q}+\delta^q,1)$, with $x\in\Fqq$ satisfying (\ref{even1}).  The point $Q_x$ lies on $\ell$ if  $ux+v=0$.  That is, the line $\ell$ contains at most one point of $\pedal(\ee)$.   

Let $\ell$ be a line   not through $U_\infty $ or $T_\infty$, so $\ell$ has  coordinates $[u,1,v]$, $u,v\in\Fqq$, $u,v\neq0$.  The point $Q_x$ lies on $\ell$ if 
   \[
   u x+\alpha  x^2+\alpha  ^qx^{2q}+\delta^q+v=0.
   \]
We expand by: writing $u=u_0+u_1\delta$, $v=v_0+v_1\delta$, $x=x_0+x_1\delta$, $\alpha=a_0+a_1\delta$  for unique $u_0,\ldots,a_1\in\Fq$;  equating coefficients of the basis $\{1,\delta\}$;  and writing as homogeneous equations in $(x_0,x_1,x_2)$. This gives us  two equations in $\PG(2,q)$:
   \begin{eqnarray}
 u_0x_0x_2+vu_1x_1x_2+v(a_1x_0^2+(a_0+a_1)x_1^2)+(1+v_0)x_2^2&=&0\\
  u_1 x_0+u_0x_1+(1+v_1) x_2&=&0.\Labele{25}
   \end{eqnarray}
 As $u\neq0$, at least one of $u_0,u_1$ is nonzero, so (\ref{25})  is the equation of a line in $\PG(2,q)$.
This line   meets   the conic $\mathcal C$ whose equation is given in (\ref{C19}) in at most two distinct points  in $\PG(2,q)$. 
Thus there are at most two values of $x$ for which the point $Q_x$ lies on the line $\ell=[u,1,v]$, that is, $|\ell\cap\pedal(\ee)|\leq 2$  as required. 
\end{proof}

\subsection{Proof of Theorem~\ref{24}}

\emph{Proof of Theorem~\ref{24}.}\ \ Let $\Uab$ be a non-classical BM unital in $\PG(2,q^2)$, $q$ even.  By Result~\ref{21even}, the group $G$ has   one  orbit  on the points of $\PG(2,q^2)\setminus\{\Uab,\li\}$.  
Let $P$ be a point, $P\notin\Uab\cup\li$, so $P$ lies in   the same orbit as   $\ee=(0,\delta,1)$. 
The Baer pencil property of $\pedal(P)$ follows from Lemma~\ref{15}; and $\pedal(P)$ is an arc  by   Lemmas~\ref{22} and \ref{23}. This completes the proof of Theorem~\ref{24}.
\hfill$\square$

\section{Comments}

We used Magma \cite{magma} to search for additional geometric properties of a pedal set for small values of $q$, and briefly discuss here.  

We first consider whether a pedal set is contained in a Baer subplane. Let $\U$ be a non-classical orthogonal BM unital wrt $\li$ in $\PG(2,q^2)$ and let $Q\notin\U$, $Q\notin\li$. Magma searches show that in $\PG(2,q^2)$ with  $4\leq q\leq 32$,  $\pedal(Q)$ meets a Baer subplane in at most four points. This leads us to pose the following two conjectures.

\emph{Conjectures} 
\begin{enumerate}
\item  Let $\U$ be an ovoidal BM unital wrt $\li$ in $\PG(2,q^2)$.  Let $Q$ be a point $Q\notin\U$, $Q\notin\li$ and $\B$ a Baer subplane. If $\U$ is nonclassical, then $|\pedal(Q)\cap\B|\leq 4$. 
\item A unital $\U$ in $\PG(2,q^2)$ is classical iff for  every point $Q\notin\U$, $\pedal(Q)$ is contained in a Baer subplane. 
\end{enumerate}


Next we consider the geometrical structure of a pedal set in the Bruck-Bose representation of $\PG(2,q^2)$ in $\PG(4,q)$. Let $\U$ be a non-classical BM unital wrt $\li$ in $\PG(2,q^2)$ and let $Q\notin\U\cup\li$. 
Then  the points of  $\pedal(Q)$ lie on the lines of a Baer pencil with vertex a point $U_\infty\in\li$ and base a Baer subline that contains the point $T_\infty=(0,1,0)\in\li$. 
(We  showed this for the orthogonal case in Lemmas~\ref{10}, \ref{13}, \ref{15}.   A similar  argument can be used  to prove this in the case  when $\U$ is the Buekenhout-Tits unital.) 
In the Bruck-Bose representation of $\PG(2,q^2)$ in $\PG(4,q)$, the set of points on such a Baer pencil corresponds to the set of points in a $3$-space. Let $\K_Q$ denote the set of $q+1$ points in $\PG(4,q)$ corresponding to the points of $\pedal(Q)$. So $\K_Q$ is a set of $q+1$ points lying in a $3$-space and in an orthogonal cone. That is, $\K_Q$ is contained in a (3-dimensional) conic cone. An interesting question is whether $\K_Q$ satisfies any additional geometrical conditions in $\PG(4,q)$. We used Magma \cite{magma} to look at the pedal set for small values of $q$. Magma results show that if  $4\leq q\leq 32$, then for each point $Q$, every plane of $\PG(4,q)$ meets $\K_Q$ in either $0$, $1$, $2$, $3$ or $4$ points, moreover the intersection number $4$ does occur. 
 
%
%
%
%

\bigskip\bigskip

{\bfseries Author information}

S.G. Barwick. School of Mathematical Sciences, University of Adelaide, Adelaide, 5005, Australia.
susan.barwick@adelaide.edu.au

W.-A. Jackson. School of Mathematical Sciences, University of Adelaide, Adelaide, 5005, Australia.
wen.jackson@adelaide.edu.au

P. Wild. Royal Holloway, University of London, TW20 0EX, UK. peterrwild@gmail.com


\begin{thebibliography}{99}
  
  \bibitem{apv} N. Abarz\'ua, R. Pomareda and O. Vega. Feet in orthogonal-Buekenhout-Metz unitals. \emph{Adv. Geom.},   {\bfseries 18} (2018) 229--236.
  
\bibitem{aguglia}  A. Aguglia and G.L. Ebert. A combinatorial characterization of classical unitals. \emph{Arch. Math.}, {\bfseries 78} (2002) 166--172.

\bibitem{andre} J.\ Andr\'e. \"Uber nicht-Desarguessche Ebenen mit
 transitiver Translationgruppe. \emph{Math.\ Z.}, {\bfseries 60} (1954)
 156--186.

\bibitem{bake90} R.D.\ Baker and G.L.\ Ebert. Intersections of
 unitals in the Desarguesian plane. \emph{Congr.\ Numer.}, {\bfseries  70}
 (1990) 87--94.

\bibitem{bake92} R.D.\ Baker and G.L.\ Ebert. On Buekenhout-Metz
 unitals of odd order. \emph{ J.\ Combin.\ Theory Ser.\ A}, {\bfseries 60}
 (1992) 67--84.

  
  
  \bibitem{book} S.G. Barwick and G.L. Ebert. Unitals in projective planes. \emph{Springer Monographs in Mathematics}. Springer, New York, 2008. 
  
\bibitem{magma}  W. Bosma, J. Cannon  and C. Playoust. The Magma algebra system. I. The user language, \emph{J. Symbolic Comput.}, {\bfseries 24} (1997)  235--265. 

\bibitem{bruc64} R.H.\ Bruck and R.C.\ Bose. The construction of
 translation planes from projective spaces. \emph{J.\ Algebra}, {\bfseries 
 1} (1964) 85--102.
 
 \bibitem{buek} F.\ Buekenhout. Existence of unitals in finite
 translation planes of order $q^2$ with a kernel of order $q$.
\emph{Geom.\ Dedicata}, {\bfseries 5} (1976) 189--194.

\bibitem{bruc66} R.H.\ Bruck and R.C.\ Bose. Linear
representations of projective planes in projective spaces. \emph{
J.\ Algebra}, {\bfseries 4} (1966) 117--172.

  
 \bibitem{dickson} L.E. Dickson. On families of quadratic forms in a general field.
 \emph{Quart. J. Pure Appl. Math.}, {\bfseries 39} (1908) 316--333.  
 
\bibitem{dove96} J.\ Dover.
Some design-theoretic properties of Buekenhout unitals. \emph{J.\
Combin.\ Des.}, {\bfseries 4} (1996) 449--456.

\bibitem{eber92} G.L.\ Ebert. On Buekenhout-Metz Unitals of even
order. \emph{ Eur.\ J.\ Combin.}, {\bfseries 13} (1992) 109--117.


\bibitem{eber97} G.L.\ Ebert.
Buekenhout-Tits unitals. \emph{ J.\ Algebraic Combin.}, {\bfseries 6}
(1997) 133--140.

\bibitem{fvdv} J. Faulkner and G. Van de Voorde. On the equivalence, stabilisers, and feet of Buekenhout-Tits unitals. arXiv:2209.10863v1. 2022.

\bibitem{hirs} J.W.P.\ Hirschfeld and T.\ Sz\H{o}nyi. Sets in
 a finite plane with few intersection numbers and a distinguished
 point. \emph{Discrete Math.}, {\bfseries 97} (1991) 229--242.



\bibitem{metz} R.\ Metz. On a class of unitals. \emph{Geom.
  Dedicata}, {\bfseries 8} (1979) 125--126.

\bibitem{segre} B. Segre.\ Teoria de Galois, fibrazioni proiettive e
 geometrie non-desarguesiane. \emph{Ann.\ Mat.\ Pura Appl.}, {\bfseries 64}
 (1964) 1--76.

\bibitem{thas} J.A. Thas. A combinatorial characterisation of Hermitian curves. \emph{J. Algebraic  Combin.}, {\bfseries 1} (1992) 97--102. 

  \end{thebibliography}
\end{document}